\documentclass{article}

\PassOptionsToPackage{numbers}{natbib} % numerical citations regardless of build order
\usepackage[main, final]{neurips_2026}

\usepackage[utf8]{inputenc} % allow utf-8 input
\usepackage[T1]{fontenc}    % use 8-bit T1 fonts
\usepackage{hyperref}       % hyperlinks
\usepackage{url}            % simple URL typesetting
\usepackage{booktabs}       % professional-quality tables
\usepackage{amsfonts}       % blackboard math symbols
\usepackage{nicefrac}       % compact symbols for 1/2, etc.
\usepackage{microtype}      % microtypography
\usepackage{xcolor}         % colors
\usepackage{amsmath,amssymb,amsfonts}
\usepackage{amsthm}
\usepackage{algorithm}
\usepackage{algpseudocode}
\usepackage{wrapfig}
\usepackage{enumitem}
\usepackage{bm}
\usepackage{scalerel}
\usepackage{graphicx}
\usepackage{float}
\usepackage{array}
\usepackage{colortbl}
\usepackage{tikz}
\usepackage{subcaption}

\newtheorem{theorem}{Theorem}[section]
\newtheorem{lemma}[theorem]{Lemma}

\newtheorem{definition}[theorem]{Definition}
\newtheorem{remark}[theorem]{Remark}

\newcommand{\rr}{{\mathbb R}}
\DeclareMathOperator*{\argmin}{argmin} 
\newcommand*{\pdot}{\mathbin{\scalerel*{\boldsymbol\odot}{\circ}}}

\definecolor{tableblue}{RGB}{31, 78, 121}
\definecolor{headerblue}{RGB}{46, 116, 181}
\definecolor{lightgrey}{RGB}{242, 244, 247}

\title{Neural Compression of Long ADMM Trajectory for Multiparametric Quadratic Program}
\hypersetup{
  pdftitle={Neural Compression of Long ADMM Trajectory for Multiparametric Quadratic Program},
  pdfauthor={Liang Wu, Honghui Zheng, Bo Yang, Xu Yang, Yilin Mo, J\'an Drgo\v na},
}

\author{%
  Liang Wu$^{1,}$\thanks{Equal Contributions. $^{\dagger}$Corresponding Author.}\quad Honghui Zheng$^{1,*,\dagger}$\quad Bo Yang$^{2,*}$\quad  Xu Yang$^{2,*}$\quad Yilin Mo$^{2}$ \quad  J\'an Drgo\v na$^{1}$\\
  $^{1}$Johns Hopkins University\quad $^{2}$Tsinghua University\\
  \texttt{\{wliang14, hzheng39, jdrgona1\}@jh.edu}\\
  \texttt{\{yang-b21, yangx21\}@mails.tsinghua.edu.cn}, \texttt{ylmo@tsinghua.edu.cn}
}

\begin{document}

\maketitle

\begin{abstract}
Solving large-scale multiparametric quadratic programs (mpQPs) in real time often requires thousands of iterative optimization updates, creating a major computational bottleneck in model predictive control and learning-enabled systems. This paper proposes \emph{TQPNet}, a self-supervised neural compression framework that learns to compress long ADMM optimization trajectories into a compact neural warm start followed by a few refinement iterations. We derive a low-dimensional ADMM scheme operating on the primal--dual variables $(x,\beta)$ and show that its iteration admits an equivalent ReLU-layer representation, termed \emph{ADMM(ReLU)}. Building on this insight, TQPNet combines a multilayer perceptron with ReLU activations (\emph{MLP(ReLU)}) and a small number of unrolled \emph{ADMM(ReLU)} refinement layers. The network is trained offline using an optimality-condition-informed residual loss, eliminating the need for solver-generated labels. The proposed residual-based framework enables both self-supervised training and online solution certification through adaptive \emph{ADMM(ReLU)} refinement. Experiments on large-scale mpQPs and real-time MPC applications demonstrate that \emph{TQPNet} achieves high accuracy, strong generalization, and substantially faster inference than conventional optimization-based methods.
\end{abstract}

\vspace{-10pt}
\section{Introduction and Problem Statement}
\vspace{-10pt}
Quadratic programming (QP) is a fundamental optimization paradigm widely used across control, operations research, machine learning, finance, and related fields \cite{nocedal2006numerical,borrelli2017predictive}. Specifically, the widely used optimization-based control approach, model predictive control (MPC), can often be recast as a QP with time-varying parameters, referred to as a multiparametric QP (mpQP) \cite{bemporad2002explicit}. 

\begin{wrapfigure}[4]{r}{0.53\textwidth}  
\vspace{-10pt}
\rule{\linewidth}{0.6pt}
\begin{equation}\label{eqn_QP}
        \min_{x\in\rr^n}~ \frac{1}{2}x^\top Q x + q(p)^\top x,~\text{s.t.}~ Ax \geq c(p)
\end{equation}
\rule{\linewidth}{0.6pt}
\vspace{-5pt}
\end{wrapfigure}
This paper focuses on a convex mpQP problem as shown in \eqref{eqn_QP}, where $p\in\mathcal{P}\subset\rr^{n_p}$ denotes the parametric vector. The problem data, including $Q=Q^\top\in\rr^{n\times n}\succeq0$,  $A\in\rr^{m\times n}$, $q(p)\in\rr^{n}$ and $c(p)\in\rr^{m}$ dependent on $p$, and $x\in\rr^{n}$ denotes the decision vector. We assume that QP \eqref{eqn_QP} has an optimal solution set $\mathcal{X}^*$, namely the set $\{x\in\rr^n: Ax\geq c(p)\}$ is not empty. Note that this assumption is not restrictive, as infeasible instances can be equivalently handled via a slack-variable reformulation. Specifically, introducing slack variables $\xi \in \mathbb{R}^m$,  we consider the penalized problem $\min_{x,\xi}~\frac{1}{2}x^\top Q x+x^\top q(p)+\frac{\alpha}{2}\xi^\top \xi,\text{s.t. }Ax+\xi\geq c(p),~\xi\geq0$ where $\alpha>0$ is a penalization scalar, which can be written as the standard mpQP formulation \eqref{eqn_QP} by recasting $x\leftarrow\mathrm{col}(x,\xi)$, $Q\leftarrow\left[\begin{array}{cc}
    Q & 0_{n\times m} \\
    0_{m\times n} & \alpha I_m
\end{array} \right]$, $q(p)\leftarrow\left[\begin{array}{c}
     q(p)  \\
     0_m 
\end{array}\right]$, $A\leftarrow\left[\begin{array}{cc}
    A &  I_m\\
    0_{m\times n}& I_m
\end{array}\right]$, and $c(p)\leftarrow\left[\begin{array}{c}
     c(p)  \\
     0_m 
\end{array}\right]$. 

The mpQP \eqref{eqn_QP} arises from MPC formulations for both linear and nonlinear systems (e.g., via Koopman operator \cite{korda2018linear}). Despite the maturity of QP theory and algorithms \cite{wu2026EIQP,wu2026n3}, solving large-scale mpQPs in real time often requires thousands of iterative optimization updates, which remain a major computational bottleneck in modern control, robotics, and learning-enabled decision-making systems.

This challenge becomes even more pronounced in scenarios requiring massively parallel optimization, such as reinforcement-learning-MPC pipelines and differentiable control architectures, where a large number of mpQPs must be solved simultaneously and repeatedly. In such settings, there is a growing need for optimization methods that simultaneously achieve:
\textit{(i)} fast inference,
\textit{(ii)} scalability in both offline and online phases,
\textit{(iii)} batch-parallel execution,
\textit{(iv)} differentiability for end-to-end learning integration,
and \textit{(v)} trustworthy solution refinement with optimality certification.

\textbf{Related work.} Existing approaches for solving mpQPs can be broadly categorized into several classes. \emph{Online optimization-based solvers}, such as using interior-point \cite{nesterov1994interior,vanderbei1999loqo,wu2026EIQP,wu2025time,wachter2006implementation}, active-set \cite{goldfarb1983numerically,ferreau2014qpoases}, and first-order methods \cite{stellato2018osqp,lu2026practical,wu2026pi,patrinos2013accelerated}, offer convergence guarantees and optimality certificates, but do not exploit offline computation to accelerate online inference. \emph{Offline explicit region-based methods} \cite{bemporad2002explicit,tondel2003algorithm,herceg2013multi} rely on offline computation to precalculate the piecewise-affine solution mapping, enabling extremely fast online evaluation, but their offline construction suffers from the well-known combinatorial growth of critical regions, which quickly becomes prohibitive for high-dimensional problems. This limits their applicability to large-scale MPC. \emph{Supervised neural approaches} ~\cite{karg2020efficient,karg2018deep,lucia2020deep} learn mappings from problem parameters to optimal decisions using solver-generated labeled data. While enabling fast inference, they require expensive offline supervision and generally lack mechanisms for adaptive refinement or solution certification. More recently, \emph{learning-to-optimize and self-supervised approaches} \cite{chen2022learning,ichnowski2021accelerating,sambharya2024learning,donti2021dc,luken2024self,king2024metric,nguyen2025fsnet,tran2026huanet,wang2026parametric} have explored learning iterative optimization structures, warm-start strategies, or optimization dynamics without requiring labeled optimal solutions. Finally, \emph{differentiable optimization approaches} \cite{amos2017optnet,agrawal2019differentiable,butler2021efficient,besancon2023diffopt,pmlr-v139-paulus21a}, enable end-to-end gradient propagation through optimization layers, but often rely on implicit differentiation involving large Hessian-related linear systems, which can incur substantial computational and memory overhead.

Motivated by these challenges, this paper proposes \emph{TQPNet}, a self-supervised neural compression framework for mpQPs, illustrated in Fig.~\ref{fig_TQPNet}. The central idea is to learn to compress long ADMM algorithm trajectories into a compact neural warm start followed by only a few optimization refinement iterations.

\begin{figure}[!htbp]
    \centering
    \includegraphics[width=1\linewidth]{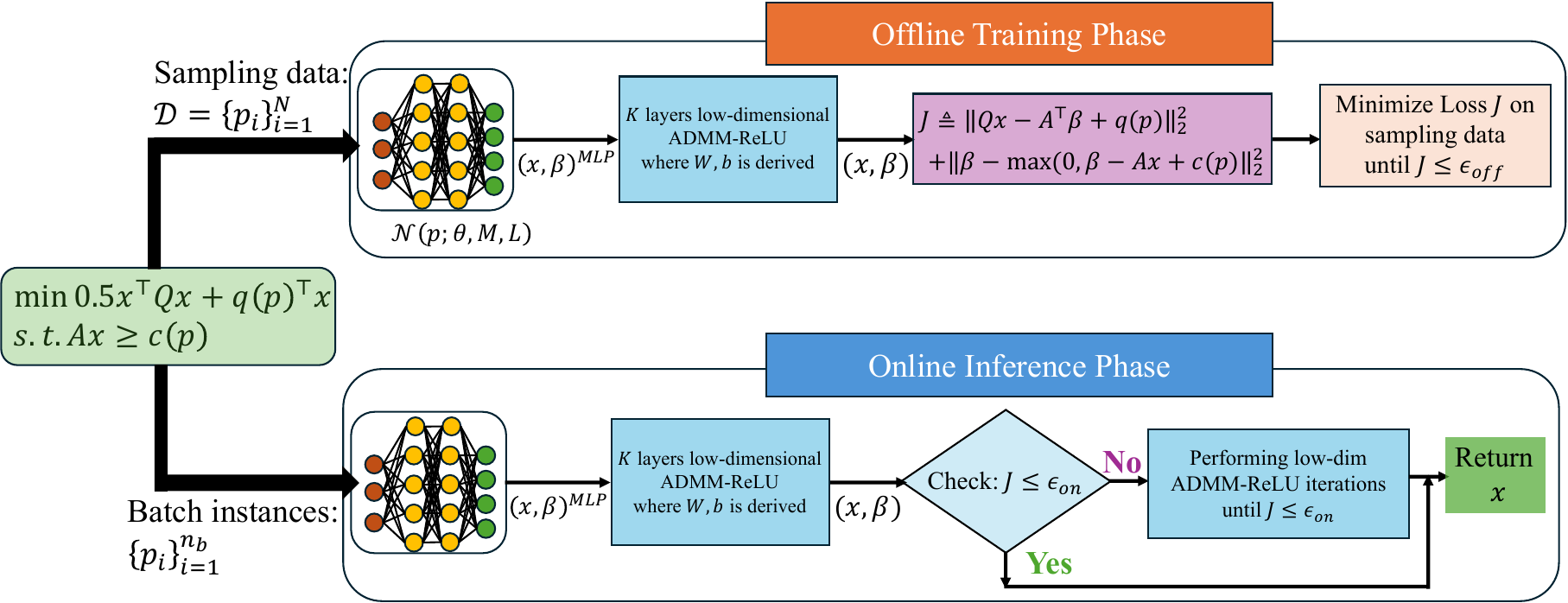}
    \caption{Architecture of \emph{TQPNet} based on the hybrid \emph{MLP(ReLU)}-\emph{ADMM(ReLU)} neural network.}
    \label{fig_TQPNet}
\end{figure}

To enable this formulation, we first derive a reduced-state ADMM iteration scheme operating directly on the primal--dual variables $(x,\beta)$. We further show that the resulting iteration admits an equivalent ReLU-layer representation, termed \emph{ADMM(ReLU)}, enabling a neural-friendly and naturally batch-parallel implementation. Building on this insight and motivated by the strong expressive power of deep multilayer perceptrons with ReLU activations (\emph{MLP(ReLU)}) for representing piecewise-affine structures, which naturally arise in mpQP solution mappings, TQPNet combines a multilayer perceptron with ReLU activations (MLP(ReLU)) and a small number of unrolled \emph{ADMM(ReLU)} refinement layers. The \emph{MLP(ReLU)} component learns a compact approximation of long optimization trajectories, while the \emph{ADMM(ReLU)} layers provide optimization-based refinement and adaptive certification. Rather than relying on solver-generated labels, the \emph{MLP(ReLU)} parameters are trained offline in a self-supervised manner using an optimality-condition-informed residual loss over sampled parameter instances. The proposed residual-based formulation provides a unified mechanism for both offline training supervision and online solution verification. When the predicted solution does not satisfy a prescribed optimality tolerance, additional \emph{ADMM(ReLU)} refinement iterations are triggered automatically to improve solution accuracy.

As a result, \emph{TQPNet} combines the efficiency of amortized optimization with the reliability of iterative optimization refinement. Extensive experiments demonstrate that the proposed framework can compress thousands of ADMM iterations into a compact \emph{MLP(ReLU)} followed by only a few \emph{ADMM(ReLU)} refinement steps, leading to substantial acceleration on large-scale mpQPs and real-time MPC applications while preserving strong solution accuracy and generalization performance.

\textbf{Contributions.} Our main contributions can be summarized as follows,
\begin{itemize}
    \item To the best of the authors’ knowledge, this is among the first works to propose a low-dimensional ADMM with lower per-iteration cost and empirically faster convergence than standard ADMM and show its equivalence to a ReLU layer, enabling a differentiable and batch-parallel \emph{ADMM(ReLU)} implementation.
    \item We propose \emph{TQPNet}, a hybrid \emph{MLP(ReLU)}–\emph{ADMM(ReLU)} architecture that learns to compress long optimization trajectories into compact neural warm starts followed by a few optimization refinement iterations. \emph{TQPNet} is an optimality-condition-informed self-supervised learning framework that eliminates the need for solver-generated labels while enabling adaptive online refinement and solution verification.
    \item More broadly, this work suggests a new paradigm for designing neural compressions of deep iterative algorithms and integrating them with the underlying optimization iterations to achieve faster online computation.
\end{itemize}

\section{Method}

\subsection{Preliminary: full-dimensional ADMM algorithm}
Different variable-splitting schemes lead to different ADMM algorithms. One standard choice is introducing an auxiliary variable $z = Ax\in\rr^m$
and QP \eqref{eqn_QP} can be reformulated as follows,
\begin{equation}
\min_{x,z}~\frac{1}{2} x^\top Q x + q(p)^\top x + \mathcal{I}_{\mathcal{Z}}(z), \text{ s.t. }  z-Ax=0
\end{equation}
\vspace{-6pt}
\begin{wrapfigure}[8]{r}{0.62\textwidth}  
\vspace{-10pt}
\rule{\linewidth}{0.6pt}
\vspace{-10pt}
\begin{subequations}\label{eqn_ADMM_bad}
    \begin{align}
        x^{k+1} &=\Big(Q+\rho A^\top A \Big)^{-1}\Big(-q(p)+\rho A^\top(z^k+u^k)\Big)\label{eqn_ADMM_bad_a}\\
        z^{k+1}&=\max\Big(c(p), Ax^{k+1}-u^k\Big)\\
        u^{k+1} &= u^k + z^{k+1} - Ax^{k+1}
    \end{align}
\end{subequations}
\rule{\linewidth}{0.6pt}
\vspace{-5pt}
\end{wrapfigure}
where $\mathcal{Z}=\{z\in\rr^m: z\geq c(p) \}$ and $\mathcal{I}_{\mathcal{Z}}(\cdot)$ is the indicator function for the set $\mathcal{Z}$. Then, its augmented Lagrangian (with the scaled-form Lagrangian variable) is given by $\mathcal{L}_{\rho}(x,z,u) = \frac{1}{2}x^\top Q x + q(p)^\top x +\mathcal{I}_{\mathcal{Z}}(z)+ \frac{\rho}{2}\|z-Ax+u\|_2^2$, where $u\in\rr^m$ denotes the scaled dual Lagrangian variable for the equality constraint. Based on Ref. \cite{boyd2011distributed}, the corresponding $(k+1)$-th ADMM update is shown in Eqn. \eqref{eqn_ADMM_bad}.
\begin{remark}\label{remark_data_requirement}
\textbf{(Requirements of Problem Data for Well-posedness of ADMM).} The $x$-update \eqref{eqn_ADMM_bad_a} is well-defined if $Q+\rho A^\top A\succ0$, or equivalently, $\operatorname{null}(Q)\cap\operatorname{null}(A)=\{0\}$,
which ensures the uniqueness of the $x$-update \eqref{eqn_ADMM_bad_a}.
\end{remark}
\begin{lemma}\label{lemma_ADMM_ReLU_3}
In the ADMM iteration \eqref{eqn_ADMM_bad}, the $x$-, $z$-, and $u$-updates are \textbf{performed sequentially}. To improve parallelizability and get faster computation speed, we can rewrite \eqref{eqn_ADMM_bad} as the following \textbf{ReLU-like} layer,
\begin{equation}\label{eqn_ADMM_ReLU_like}
    \begin{bmatrix}
        x^{k+1}\\
        z^{k+1}\\
        u^{k+1}
    \end{bmatrix}
    =
    \mathrm{ReLU}_{z\geq c(p)}\!\left(
    W_3
    \begin{bmatrix}
        x^k\\
        z^k\\
        u^k
    \end{bmatrix}
    +b_3\right),   
\end{equation}
where 

$W_3\triangleq
\begin{bmatrix}
    -\rho H(A^\top A) & 2\rho H A^\top & \rho H A^\top\\[1mm]
    A-\rho A H(A^\top A)& 2\rho A H A^\top-I & \rho A H A^\top-I\\[1mm]
    -A & I & I
    \end{bmatrix},$ $b_3\triangleq\begin{bmatrix}
    -Hq(p)\\[1mm]
    -AHq(p)\\[1mm]
    0
\end{bmatrix}$, and $\mathrm{ReLU}_{z\geq c(p)}
\left(
\begin{bmatrix}
    x\\[1mm]
    z\\[1mm]
    u
\end{bmatrix}
\right)\triangleq
\begin{bmatrix}
    x\\[1mm]
    \max(c(p),z)\\[1mm]
    u
\end{bmatrix},$ with $H\triangleq\big(Q+\rho A^\top A\big)^{-1} $.
\end{lemma}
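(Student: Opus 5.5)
The plan is to verify the block rows of \eqref{eqn_ADMM_ReLU_like} one at a time by direct substitution into \eqref{eqn_ADMM_bad}. The key observation is that the third block cannot literally be the current scaled dual $u^{k+1}=u^k+z^{k+1}-Ax^{k+1}$, because the $u$-block of $\mathrm{ReLU}_{z\geq c(p)}$ is the identity and its pre-activation $-Ax^k+z^k+u^k$ involves only the old iterates. So I will read the $u$-slot of the state as the \emph{lagged} dual variable: the layer maps $(x^k,z^k,u^{k-1})$ to $(x^{k+1},z^{k+1},u^{k})$.

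With this reading, the $u$-row reduces to the definition of $u^k$:
\[
u^k = u^{k-1} + z^k - Ax^k = -Ax^k + z^k + u^{k-1}.
\]
This matches the third row of $W_3$, namely $[-A,\ I,\ I]$ with zero bias, and the identity action of $\mathrm{ReLU}_{z\geq c(p)}$ on that block.

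For the $x$-row, I will substitute this expression for $u^k$ into \eqref{eqn_ADMM_bad_a}. The relevant sum is $z^k+u^k = 2z^k + u^{k-1} - Ax^k$, which gives
\[
x^{k+1} = -Hq(p) - \rho HA^\top A\,x^k + 2\rho HA^\top z^k + \rho HA^\top u^{k-1}.
\]
This is exactly the first block row of $W_3$ together with $b_3$. It uses $H=(Q+\rho A^\top A)^{-1}$, which is well defined by Remark~\ref{remark_data_requirement}.

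For the $z$-row, the pre-activation is $Ax^{k+1}-u^k$. I will insert $A$ times the formula just obtained for $x^{k+1}$, and subtract $u^k=-Ax^k+z^k+u^{k-1}$. Collecting terms gives coefficients $A-\rho AHA^\top A$ on $x^k$, $2\rho AHA^\top - I$ on $z^k$, and $\rho AHA^\top - I$ on $u^{k-1}$, with bias $-AHq(p)$. This matches the second block row of $W_3$ and $b_3$. Applying $\max(c(p),\cdot)$ then recovers the $z$-update of \eqref{eqn_ADMM_bad}.

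The main obstacle is not the algebra but the indexing convention. A naive reading with $u^k$ in the slot fails on the $u$-row, so the index shift must be stated explicitly. I also need to check that it is harmless: the sequence $(x^k,z^k)$ generated is identical, and $u^{k}$ is simply output one layer later. For initialization, setting $u^{-1}=u^0-z^0+Ax^0$ makes the layer reproduce \eqref{eqn_ADMM_bad} from any starting triple. Once this is settled, all three outputs depend only on the previous state, so the $x$-, $z$-, and $u$-blocks can be computed in parallel, which is the parallelizability claim in the lemma.
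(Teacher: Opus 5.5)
Your proposal is correct and is essentially the paper's argument. The paper moves the $u$-update ahead of the $x$- and $z$-updates, setting $u^{k+1}=u^k+z^k-Ax^k$ and then computing $x^{k+1}$ and $z^{k+1}$ with $u^{k+1}$; it describes this as a one-iteration lag in $u$, equivalent to a special initialization of $u^0$, and then substitutes exactly as you do. Your relabeling $(x^k,z^k,u^{k-1})\mapsto(x^{k+1},z^{k+1},u^k)$ with $u^{-1}=u^0-z^0+Ax^0$ is the same device, with the lag and the initialization made explicit.
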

\begin{proof}
    See Appendix \ref{appendix_sec_lemma_ADMM_ReLU_3}.
\end{proof}

\begin{remark}\label{remark_cost_per_iteration}
The ADMM-ReLU formulation \eqref{eqn_ADMM_ReLU_like} is \textbf{full-dimensional} over $(x,z,u)\in\mathbb{R}^{n+2m}$, with computational complexity $O((n+2m)^2)$, as demonstrated in \cite{bishop2024relu}.
\end{remark}

To simultaneously eliminate the data requirements noted in Remark~\ref{remark_data_requirement} and reduce the per-iteration computational cost of ADMM (i.e., the ADMM-ReLU layer) discussed in Remark~\ref{remark_cost_per_iteration}, we propose a computationally efficient \textbf{low-dimensional} ADMM-ReLU scheme.

\subsection{Low-dimensional ADMM-ReLU layer}
Before presenting the proposed low-dimensional ADMM-ReLU, we introduce a decomposition of $A$, termed the \textit{Canonical Outer-Product Expansion} (COPE), as an analytical tool. Notably, the resulting low-dimensional formulation does not rely on the COPE decomposition and does not perform the decomposition in practice.
\begin{definition}
    \textbf{(COPE)}. Let $A\in\mathbb{R}^{m\times n}$ be a matrix with $\mathrm{nnz}(A)=o$ nonzero entries. Let $\{(i_k,j_k,A_{i_kj_k})\}_{k=1}^{o}$ be any enumeration of the nonzero entries of $A$, where $A_{i_kj_k}\neq 0$, $i_k\in\{1,\dots,m\}$, and $j_k\in\{1,\dots,n\}$. Then $A$ admits the \emph{canonical outer-product expansion}
    \begin{equation}\label{eq:canonical_rank_one_expansion}
    A \;=\; \sum_{k=1}^{o}A_{i_kj_k}\, e_{i_k}^m (e_{j_k}^n)^\top.
    \end{equation}
    where $\{e_i^m\}$ and $\{e_j^n\}$ are the canonical basis vectors in $\mathbb{R}^{m}$ and $\mathbb{R}^{n}$, respectively.
    Denote 
    \begin{equation}\label{eqn_cope}
        U \triangleq \sum_{k=1}^o A_{i_k,j_k} e_{i_k}^m (e_{k}^o)^\top\in\mathbb{R}^{m\times o},\quad
        V \triangleq \sum_{k=1}^o e_{j_k}^n (e_{k}^o )^\top\in\mathbb{R}^{n\times o}
    \end{equation}
    where $\{e_k^o\}$ are the canonical basis vectors in $\mathbb{R}^{o}$, then
    \[
    UV^\top=A_{i_1,j_1}e_{i_1}^m (e_1^o)^\top e_1^o (e_{j_1}^n)^\top + \cdots + A_{i_o,j_o}e_{i_o}^m (e_o^o)^\top e_o^o (e_{j_o}^n)^\top= A,    
    \]
    which is the matrix form of COPE.
\end{definition}
\begin{lemma}\label{lemma_D_U_D_V}
    Let $A\in\rr^{m\times n}$ be a (sparse) matrix each of whose rows and columns contains at least one nonzero entry, and let $(U,V)$ defined in \eqref{eqn_cope} be its COPE representation. Then, the matrices $D_U\triangleq UU^\top\in\mathbb{R}^{m\times m}$ and $D_V\triangleq VV^\top\in\mathbb{R}^{n\times n}$ are positive diagonal matrices, with
    \begin{equation}
        (D_U)_{ij} =
        \begin{cases}
            A_{i,:}A_{i,:}^\top, & i=j,\\
            0,   & i\neq j,
        \end{cases}
        ~ i,j=1,\dots,m,\quad
        (D_V)_{ij} =
        \begin{cases}
            \mathrm{nnz}(A_{:,i}), & i=j,\\
            0,   & i\neq j,
        \end{cases}
        ~ i,j=1,\dots,n,
    \end{equation}
    which indicates that the $i$-th diagonal value of $(D_U)$ is the sum of the squares of all non-zero elements in $A_{i,:}$ (the $i$-th row of $A$), and the $i$-th diagonal value of $(D_V)$ is the number of non-zero elements in $A_{:,i}$ (the $i$-th column of $A$).
\end{lemma}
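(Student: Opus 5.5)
The proof is a direct computation from the definitions in \eqref{eqn_cope}. The key fact is that the canonical basis vectors $\{e_k^o\}$ are orthonormal, so $(e_k^o)^\top e_l^o=\delta_{kl}$. Expanding the products therefore kills all cross terms. First, for $U$:
\[
UU^\top=\sum_{k=1}^o\sum_{l=1}^o A_{i_kj_k}A_{i_lj_l}\,e_{i_k}^m (e_k^o)^\top e_l^o (e_{i_l}^m)^\top=\sum_{k=1}^o A_{i_kj_k}^2\, e_{i_k}^m (e_{i_k}^m)^\top .
\]
Similarly, for $V$:
\[
VV^\top=\sum_{k=1}^o e_{j_k}^n (e_{j_k}^n)^\top .
\]
Each summand $e_i^m(e_i^m)^\top$ is the diagonal matrix with a single $1$ in position $(i,i)$. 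Hence both $D_U$ and $D_V$ are sums of diagonal matrices, and so are diagonal. This settles the off-diagonal entries ($i\neq j$) in both cases.

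Next I identify the diagonal entries by grouping the sum over $k$ according to the row index $i_k$ (for $D_U$) or the column index $j_k$ (for $D_V$). For a fixed row $i$, the indices $k$ with $i_k=i$ enumerate exactly the nonzero entries of $A_{i,:}$, each exactly once, since the enumeration is a bijection onto the nonzero entries. This gives
\[
(D_U)_{ii}=\sum_{k:\,i_k=i}A_{ij_k}^2=\sum_{j:\,A_{ij}\neq0}A_{ij}^2=A_{i,:}A_{i,:}^\top,
\]
where the last equality holds because the zero entries contribute nothing. Likewise, for a fixed column $i$, the indices $k$ with $j_k=i$ are in bijection with the nonzero entries of $A_{:,i}$. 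Each contributes $1$, so $(D_V)_{ii}=\mathrm{nnz}(A_{:,i})$.

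Finally, positivity follows from the hypothesis that every row and every column of $A$ has at least one nonzero entry. This gives $A_{i,:}A_{i,:}^\top>0$, as a sum of squares with at least one nonzero term, and $\mathrm{nnz}(A_{:,i})\ge1$. So both matrices are positive diagonal, and in particular invertible.

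There is no real obstacle here. The only step needing care is the bookkeeping in the regrouping step: one must state explicitly that the enumeration in the COPE definition visits each nonzero entry exactly once. This rules out double counting and omissions in $(D_U)_{ii}$ and $(D_V)_{ii}$.
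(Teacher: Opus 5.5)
Your proposal is correct and follows the same route as the paper: expand $UU^\top$ and $VV^\top$ using $(e_k^o)^\top e_l^o=\delta_{kl}$, then group the surviving rank-one terms by row index (resp.\ column index) to read off the diagonal entries. You also make two points explicit that the paper's computation leaves implicit: that the enumeration visits each nonzero entry exactly once, and that strict positivity follows from the no-zero-row/column hypothesis.
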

\begin{proof}
    See Appendix \ref{appendix_sec_lemma_D_U_D_V}.
\end{proof}
Note that the subsequent analysis does not require a full COPE decomposition, but only the inexpensive and trivial computation of $D_V$ and $D_U$. Then, using the COPE decomposition $A=UV^\top$ and introducing the auxiliary variable $y\in\rr^o$: $y=V^\top x$, we can equivalently reformulate QP \eqref{eqn_QP} as the following optimization:
\begin{equation}
        \min_{x,y}~ \frac{1}{2}x^\top Q x + q(p)^\top x + \mathcal{I}_{\mathcal{Y}}(y),~\text{s.t.}~ y-V^\top x  =0
\end{equation}
where the set $\mathcal{Y}\triangleq\{y\in\rr^o:Uy\geq c(p)\}$ and $\mathcal{I}_{\mathcal{Y}}(\cdot)$ is the indicator function for the set $\mathcal{Y}$. 
\begin{wrapfigure}[8]{r}{0.64\textwidth}  
\rule{\linewidth}{0.6pt}
\vspace{-10pt}
\begin{subequations}\label{eqn_ADMM}
    \begin{align}
        y^{k+1}&=\argmin_{y} \frac{1}{2}\|y-(V^\top x^k-u^k)\|_2^2,~\text{s.t.}~Uy\geq c(p)\label{eqn_ADMM_y}\\
        x^{k+1} &=(Q+\rho D_V)^{-1}\left(\rho V(y^{k+1}+u^k)- q(p)\right)\\
        u^{k+1} &= u^k + y^{k+1}-V^\top x^{k+1} 
    \end{align}
\end{subequations}
\rule{\linewidth}{0.6pt}
\vspace{-5pt}
\end{wrapfigure}
Then, its augmented Lagrangian (with the scaled-form Lagrangian variable) is given by $\mathcal{L}_{\rho}(x,y,u)=\frac{1}{2}x^\top Q x + q(p)^\top x + \mathcal{I}_{\mathcal{Y}}(y) + \frac{\rho}{2}\| y-V^\top x  +u\|_2^2$, where $u\in\rr^o$ denotes the scaled dual Lagrangian variable for the equality constraint. Based on Ref. \cite{boyd2011distributed}, the corresponding $(k+1)$-th ADMM update is shown in \eqref{eqn_ADMM}.

\begin{lemma}\label{lemma_closed_form}
    The $y$-update \eqref{eqn_ADMM_y} admits the following closed-form solution
    \begin{subequations}\label{eqn_y_update_closed_form}
        \begin{align}
            \beta^{k}&=\max\left(0,D_{U}^{-1}(-Ax^k+Uu^k+c(p))\right)\\
                y^{k+1}&=V^\top x^k-u^k+U^\top\beta^{k}
        \end{align}
    \end{subequations}
\end{lemma}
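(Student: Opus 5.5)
The $y$-update \eqref{eqn_ADMM_y} is the Euclidean projection of $v\triangleq V^\top x^k-u^k$ onto the polyhedron $\mathcal{Y}=\{y:Uy\geq c(p)\}$. I would verify that the proposed $y^{k+1}=v+U^\top\beta^k$ satisfies the KKT conditions of this strongly convex QP, which are necessary and sufficient. The key structural fact is Lemma~\ref{lemma_D_U_D_V}: $UU^\top=D_U$ is positive diagonal. The constraints therefore decouple in the dual, and the projection reduces to a componentwise clipping.

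\textbf{Step 1 (KKT system).} With multiplier $\beta\geq0$ for $Uy\geq c(p)$, the Lagrangian is $\frac12\|y-v\|^2-\beta^\top(Uy-c(p))$. Stationarity gives $y=v+U^\top\beta$. The remaining conditions are primal feasibility $Uy\geq c(p)$, dual feasibility $\beta\geq0$, and complementarity $\beta_i(Uy-c(p))_i=0$ for each $i$. Since the objective is strongly convex and the constraints are affine, any $(y,\beta)$ satisfying these conditions gives the unique minimizer. Slater-type qualification is not needed, since the constraints are affine; feasibility of $\mathcal{Y}$ follows from the construction below.

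\textbf{Step 2 (reduce to a diagonal LCP).} Substituting stationarity gives
\[
Uy-c(p)=Uv+D_U\beta-c(p).
\]
Using $UV^\top=A$, we have $Uv=Ax^k-Uu^k$. Set $r\triangleq D_U^{-1}(-Ax^k+Uu^k+c(p))$, so that $Uy-c(p)=D_U(\beta-r)$. Because $D_U$ is diagonal with positive entries, the complementarity problem splits into $m$ scalar problems: find $\beta_i\geq0$ with $(D_U)_{ii}(\beta_i-r_i)\geq0$ and $\beta_i(\beta_i-r_i)=0$.

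\textbf{Step 3 (solve).} Each scalar problem is solved by $\beta_i=\max(0,r_i)$. If $r_i\le0$, then $\beta_i=0$ and the slack equals $-(D_U)_{ii}r_i\ge0$. If $r_i>0$, then $\beta_i=r_i$ and the slack is $0$. This gives $\beta^k=\max(0,r)$, and $y^{k+1}=V^\top x^k-u^k+U^\top\beta^k$, as claimed.

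The only delicate point is the identity $UU^\top=D_U$ diagonal, which makes the dual decouple. It is supplied by Lemma~\ref{lemma_D_U_D_V}, which uses the fact that each column of $U$ has exactly one nonzero entry. The hypothesis that every row of $A$ is nonzero ensures $D_U$ is invertible. Everything else is routine verification.
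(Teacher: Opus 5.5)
Your proposal is correct and follows the paper's proof essentially step for step. Both write the KKT system of the projection, eliminate $y$ via stationarity, and use $UV^\top=A$ and $UU^\top=D_U$ to reduce to a decoupled diagonal LCP that is solved componentwise by $\beta=\max(0,D_U^{-1}(-Ax^k+Uu^k+c(p)))$. Your remarks on KKT sufficiency for a strongly convex QP with affine constraints, and your explicit two-case check of each scalar complementarity problem, fill in details the paper leaves implicit.
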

\begin{proof}
    See Appendix \ref{appendix_sec_lemma_closed_form}.
\end{proof}

\begin{theorem}\label{theorem_low_dim_ADMM_ReLU}
    \textbf{(Low-dimensional ADMM-ReLU Layer).} The COPE-based ADMM iteration \eqref{eqn_ADMM} can be equivalently rewritten as a low-dimensional ADMM-ReLU layer as follows,
    \begin{equation}\label{eqn_ADMM_ReLU_2}
    \left[\begin{array}{c}
        x^{k+1}  \\
         \beta^{k+1} 
    \end{array}\right]=\mathrm{ReLU}_{\beta\geq0}\left( W_2\left[\begin{array}{c}
        x^{k}  \\
         \beta^{k} 
    \end{array}\right]+b_2\right)
    \end{equation}
    where $H_V\triangleq\left(Q+\rho D_V\right)^{-1}$, $\mathrm{ReLU}_{\beta\geq0}\left(\left[\begin{smallmatrix} x \\ \beta \end{smallmatrix}\right]\right)\triangleq\left[\begin{smallmatrix} x \\ \max(0,\beta) \end{smallmatrix}\right]$, and
    \[
    \begin{aligned}
    W_2&\triangleq\left[\begin{array}{cc}
        \rho H_VD_V &  \rho H_VA^\top\\
        -D_U^{-1}A(2\rho H_VD_V-I)& I-2\rho D_U^{-1}AH_VA^\top
    \end{array}\right],\\
    b_2&\triangleq\left[\begin{array}{c}
         -H_Vq(p)  \\
        2 D_U^{-1}AH_Vq(p)  + D_U^{-1}c(p)
    \end{array}\right],
    \end{aligned}
    \]
    with computational complexity $O((n+m)^2)$ per layer or iteration.
\end{theorem}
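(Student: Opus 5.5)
The plan is to eliminate the auxiliary variables $y$ and $u$ from \eqref{eqn_ADMM} by substituting the closed form of Lemma~\ref{lemma_closed_form}. What remains is a recursion in $(x^k,\beta^k)$ alone, where $\beta^k=\max\bigl(0,D_U^{-1}(-Ax^k+Uu^k+c(p))\bigr)$ is the multiplier computed inside the $y$-update at iteration $k$. Two facts will be used repeatedly:
\[
UV^\top=A\quad\text{(COPE)},\qquad VV^\top=D_V,\quad UU^\top=D_U\quad\text{(Lemma~\ref{lemma_D_U_D_V})}.
\]
A preliminary remark is that $H_V=(Q+\rho D_V)^{-1}$ is always well defined. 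Indeed, $D_V$ is a positive diagonal matrix by Lemma~\ref{lemma_D_U_D_V} and $Q\succeq0$, so $Q+\rho D_V\succ0$. This already removes the requirement of Remark~\ref{remark_data_requirement}, and likewise $D_U^{-1}$ exists.

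\emph{Step 1 (the $x$-row).} Insert $y^{k+1}=V^\top x^k-u^k+U^\top\beta^k$ into the $x$-update. The $u^k$ terms cancel in $y^{k+1}+u^k$, which gives
\[
x^{k+1}=H_V\bigl(\rho VV^\top x^k+\rho VU^\top\beta^k-q(p)\bigr)=\rho H_VD_Vx^k+\rho H_VA^\top\beta^k-H_Vq(p).
\]
This is exactly the first block row of $W_2$ and $b_2$. Since no ReLU acts on it, it matches the $x$-component of $\mathrm{ReLU}_{\beta\ge0}$.

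\emph{Step 2 (eliminating $u$).} From the dual update, $u^{k+1}=u^k+y^{k+1}-V^\top x^{k+1}=V^\top x^k+U^\top\beta^k-V^\top x^{k+1}$. The key observation is that $u$ enters the next $\beta$ only through $Uu^{k+1}$. Multiplying by $U$ yields
\[
Uu^{k+1}=Ax^k+D_U\beta^k-Ax^{k+1}.
\]
Thus $Uu^{k+1}$ is fully determined by $(x^k,\beta^k,x^{k+1})$, and the $o$-dimensional variables $y,u$ never need to be stored. This step is the main conceptual point: recognizing that $U^\top\beta$ lives in $\mathbb{R}^o$, yet the recursion closes in dimension $n+m$ because $U$ always multiplies from the left. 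Some care with the iteration index is also needed here, namely that $\beta^k$ is formed from $(x^k,u^k)$ before the $x$-update.

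\emph{Step 3 (the $\beta$-row).} Substitute this expression into
\[
\beta^{k+1}=\max\bigl(0,D_U^{-1}(-Ax^{k+1}+Uu^{k+1}+c(p))\bigr),
\]
which gives the argument $\beta^k+D_U^{-1}\bigl(Ax^k-2Ax^{k+1}+c(p)\bigr)$. Now replace $x^{k+1}$ by its Step~1 expression and collect terms:
\begin{itemize}
\item the coefficient of $x^k$ is $-D_U^{-1}A(2\rho H_VD_V-I)$;
\item the coefficient of $\beta^k$ is $I-2\rho D_U^{-1}AH_VA^\top$;
\item the constant is $2D_U^{-1}AH_Vq(p)+D_U^{-1}c(p)$.
\end{itemize}
These match the second block rows of $W_2$ and $b_2$, and the outer $\max(0,\cdot)$ is the $\beta$-part of $\mathrm{ReLU}_{\beta\ge0}$.

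\emph{Step 4 (complexity).} Once $W_2\in\mathbb{R}^{(n+m)\times(n+m)}$ and $b_2$ are precomputed, each layer is a dense matrix--vector product plus an elementwise max. This gives the stated $O((n+m)^2)$ cost per iteration.
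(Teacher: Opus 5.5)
Your proposal is correct and follows the paper's proof step for step: substitute the closed-form $y$-update, cancel $u^k$ in the $x$-update to get the first block row, use $Uu^{k+1}=A(x^k-x^{k+1})+D_U\beta^k$ to close the $\beta$-recursion in $(x,\beta)$, and read off $W_2$ and $b_2$, where you collect the terms explicitly and the paper only says the rewrite is easy. The only difference is that the paper fixes the starting point by choosing $u^0=0$ and $\beta^0=\max\bigl(0,D_U^{-1}(c(p)-Ax^0)\bigr)$, whereas your definition of $\beta^k$ from $(x^k,u^k)$ covers an arbitrary $u^0$ implicitly; it would be worth stating that initialization explicitly.
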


\begin{proof}
    See Appendix \ref{appendix_sec_low_dim_ADMM_ReLU}. 
\end{proof}

In the proposed low-dimensional ADMM-ReLU \eqref{eqn_ADMM_ReLU_2}, $(x,\beta)$ can be represented as a single affine-ReLU operator evaluation, enabling a parallelizable and neural-friendly implementation that naturally supports batched computations. Furthermore, the unrolled form of \eqref{eqn_ADMM_ReLU_2} is inherently differentiable, allowing seamless end-to-end integration with learning-based modules. However, the ADMM iteration scheme may require many iterations when applied to ill-conditioned mpQPs. This motivates the development of a compact neural proxy to learn the solution–parameter mapping of mpQP in the offline phase.

\subsection{Self-supervised Learning of Trustworthy Neural Proxy}
A well-known result is that the solution of the mpQP \eqref{eqn_QP} partitions the parameter vector space into critical regions (CRs), where a CR is the set of all parameter vectors $p$ for which a certain combination of constraints is active at the optimizer of the multiparametric QP \eqref{eqn_QP}. For each CR, the optimizer $x^*$
 is a piecewise affine (PWA) function of $p$, see \cite{bemporad2002explicit}, i.e.,
\begin{equation}\label{eqn_PWA}
    \textit{Explicit PWA: } x^* = \begin{cases} 
        K_1p+ f_1, & \text{if } p \in \mathcal{R}_1\triangleq\{p\in\rr^{n_p}| G_1p\leq g_1 \} \\ 
        \quad\vdots,&\vdots \\
        K_{n_r}p + f_{n_r}, & \text{if } p \in \mathcal{R}_{n_r} \triangleq\{p\in\rr^{n_p}| G_{n_r}p\leq g_{n_r}\}
    \end{cases}
\end{equation}
One major drawback of the \textit{explicit PWA} representation \eqref{eqn_PWA} of the multiparametric QP \eqref{eqn_QP} is that the number of regions $n_r$ required for an exact representation can grow exponentially with the problem dimension, leading to large memory requirements and rendering \textit{explicit PWA} approaches \eqref{eqn_PWA} non-scalable.

\subsubsection{Hybrid \emph{MLP(ReLU)}-\emph{ADMM(ReLU)} neural network structure}
A scalable and memory-efficient alternative is a deep MLP with ReLU activation functions, motivated by the expressive power results in \cite{montufar2014number}. In this paper, we adopt the following \emph{MLP(ReLU)}-\emph{ADMM(ReLU)} parameterization as a proxy for the solution of multiparametric QP \eqref{eqn_QP}:
\begin{equation}\label{eqn_DNN_proxy}
    \left(\begin{array}{c}
         x^*\\
         \beta^*\\
    \end{array}\right)
    =\mathrm{ADMM\text{-}ReLU}_{K,\rho}\Big(\mathcal{N}(p;\theta,M,L)\Big)
\end{equation}
where, $\mathcal{N}(p;\theta,M,L)=F_{L+1}\circ G_L\circ F_L\circ \cdots \circ G_1\circ F_1(p):\mathbb{R}^{n_p}\rightarrow\mathbb{R}^{n}$ with $F_l(\xi_{l-1}) = W_l \xi_{l-1} + b_l$, and $G_l(\xi)=\max(0,\xi),$ for all $l=1,\dots,L$, (where $L\geq 2$); $\mathrm{ADMM\text{-}ReLU}_{K,\rho}(\cdot)$ denotes the proposed low-dimensional ADMM-ReLU formulation in \eqref{eqn_ADMM_ReLU_2}, with a fixed number of iterations (or layers) $K$ and a penalty parameter $\rho$ (can be learnable). The learnable parameter is given by $\theta=\{W_1,b_1,\dots,W_{L+1},b_{L+1}\}$, where $M$ denotes the width of each hidden layer and $L$ denotes the number of hidden layers.
\begin{lemma}[{see \cite[Thm. 4, Cor. 5]{montufar2014number}}]\label{lemma_DNN}
Every neural network $\mathcal{N}(p; \theta, M, L)$ with ReLUs as activation functions and $M\geq n_p$ represents a PWA function. In addition, a lower bound on the maximal number of affine regions that the neural network represents is given by the following expression:
\[
    \left(\prod_{l=1}^{L-1}\left\lfloor \frac{M}{n_{p}}\right\rfloor^{n_{p}}\right) \sum_{j=0}^{n_{p}}\left(\begin{array}{c}
        L \\
        j
    \end{array}\right)\sim\Omega\left(\left(\frac{M}{n_p}\right)^{(L-1)n_p}M^{n_p} \right).
\]
\end{lemma}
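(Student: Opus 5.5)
This lemma is cited from \cite[Thm.~4, Cor.~5]{montufar2014number}, so the plan is to reduce it to that result rather than rebuild the construction. There are two parts: showing that $\mathcal{N}$ is piecewise affine, and obtaining the lower bound on the number of regions.

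\emph{PWA property.} Argue by induction on the depth.
\begin{itemize}
\item Each $F_l$ is affine and each $G_l=\max(0,\cdot)$ is PWA, with pieces given by the sign patterns of its input coordinates.
\item A composition of PWA maps is PWA. On each polyhedral piece of the inner map, the outer map's pieces pull back to polyhedra.
\item Hence for every fixed activation pattern $\sigma\in\{0,1\}^{LM}$, the set of $p$ realizing $\sigma$ is a polyhedron.
\item On that polyhedron, $\mathcal{N}(p)=W_{L+1}D_{\sigma_L}W_L\cdots D_{\sigma_1}W_1p+\text{const}$, which is affine.
\end{itemize}
So $\mathcal{N}$ is continuous PWA with finitely many polyhedral regions. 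This holds for any width; the condition $M\ge n_p$ is only needed for the counting part.

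\emph{Lower bound.} Follow Montúfar et al.'s construction.
\begin{itemize}
\item Split each hidden layer's $M$ units into $n_p$ groups of $\lfloor M/n_p\rfloor$ units, one group per input coordinate.
\item In each group, choose weights so that the alternating sum of the ReLUs forms a sawtooth ``folding'' map. This map sends $\lfloor M/n_p\rfloor$ disjoint intervals of its coordinate onto the same interval.
\item The layer therefore maps $\lfloor M/n_p\rfloor^{n_p}$ disjoint boxes of the input onto a common box. Any region count realized downstream is multiplied by this factor.
\item Iterate over layers $1,\dots,L-1$ to get the product term $\prod_{l=1}^{L-1}\lfloor M/n_p\rfloor^{n_p}$. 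The alternating sums can be absorbed into the next layer's affine map $F_{l+1}$.
\item For the last hidden layer, use $M$ hyperplanes in general position in the common box. By Zaslavsky's count these give $\sum_{j=0}^{n_p}\binom{M}{j}$ regions.
\end{itemize}
The displayed formula writes $\binom{L}{j}$. I would check the source, since this is most likely a typo for $\binom{M}{j}$, and the asymptotic $\Omega\big((M/n_p)^{(L-1)n_p}M^{n_p}\big)$ is consistent with $\binom{M}{j}$. The final affine layer $F_{L+1}$ can be chosen generically so that the regions stay distinct as affine pieces, meaning adjacent regions carry different affine maps.

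\emph{Main obstacle.} The delicate step is verifying that the folding actually multiplies the region counts. The regions of the last layer, pulled back through each fold, must give distinct linear regions of the whole network and not merge. I would handle this as in the source: show that each fold restricted to each sub-box is an affine bijection onto the common box, so preimages of distinct regions are disjoint. The asymptotic form then follows from $\sum_j\binom{M}{j}\ge\binom{M}{n_p}=\Omega(M^{n_p})$ for fixed $n_p$.
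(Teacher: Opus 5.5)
Your proposal is correct and follows the same route as the paper, which gives no independent argument and rests entirely on Mont\'ufar et al.'s folding construction (Thm.~4, Cor.~5); you are also right that $\binom{L}{j}$ should read $\binom{M}{j}$ (the source has $\binom{n_L}{j}$), since otherwise the sum is at most $2^L$ and the stated $\Omega(\cdot)$ rate does not follow. One detail in your non-merging step needs tightening: take two pieces on opposite sides of a fold boundary whose images fall in a last-layer region where all units are inactive; both equal the constant $b_{L+1}$ and merge however $F_{L+1}$ is chosen, so you should orient the last-layer units so that the all-inactive pattern is realized by a bounded region in the interior of the common box (bounded regions exist whenever any folding occurs, since then $M\ge 2n_p>n_p$), or else count regions by activation pattern.
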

Lemma \ref{lemma_DNN} shows that the number of linear regions of a MLP grows exponentially with $L$ and polynomially with $M$. Furthermore, the bound in Lemma \ref{lemma_DNN} can be slightly improved when $M\geq 3n_p$; see \cite{serra2018bounding}.

\subsubsection{Self-supervised learning loss design}
Rather than using solver-generated data to build a supervised loss, this paper introduces an \textbf{optimality-condition-informed neural network} that constructs the loss within a self-supervised learning framework. Define
\begin{equation}\label{eqn_r_definition}
    r(x,\beta;p) \triangleq \begin{bmatrix}
       Qx- A^\top \beta + q(p)\\[1mm]
        \beta-\max\left(0,\beta-Ax+c(p) \right)
    \end{bmatrix}
\end{equation}
where it is immediate that $\|r(x,\beta;p)\|_2^2$ is piecewise-quadratic and convex on the solution manifold.
\begin{theorem}\label{theorem_main}
    For any $p\in\mathcal{P}$, the multiparametric QP \eqref{eqn_QP} admits a nonempty solution set $\mathcal{S}(p)$. Then, there exists a constant $\kappa(p)>0$ such that $\mathrm{dist}\big((x,\beta),\mathcal{S}(p)\big)\leq\kappa(p)\left\|r(x,\beta;p)\right\|_2$ for all $(x,\beta)$ in a neighborhood of $\mathcal{S}(p)$.
    Consequently, minimizing the \textbf{optimality-condition-informed} residual norm $\|r(x,\beta;p)\|_2$ drives the $(x,\beta)$
    toward the solution set of the multiparametric QP \eqref{eqn_QP}.
\end{theorem}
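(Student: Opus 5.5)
We plan to recognize $r(x,\beta;p)=0$ as exactly the KKT system of QP \eqref{eqn_QP} and then invoke the classical error-bound theory for piecewise-affine systems (Robinson's theorem on polyhedral multifunctions, equivalently the Hoffman-type bound for affine variational inequalities).

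\textbf{Step 1: $r$ encodes the KKT conditions.} For fixed $p$, the identity $\beta-\max(0,\beta-Ax+c(p))=\min(\beta,Ax-c(p))$ shows that the second block of $r$ vanishes iff $\beta\geq0$, $Ax-c(p)\geq0$ and $\beta^\top(Ax-c(p))=0$. The first block vanishing is stationarity. Because the QP is convex with linear constraints, no constraint qualification is needed. Hence the zero set of $r(\cdot,\cdot;p)$ equals the primal--dual KKT set $\mathcal{S}(p)=\mathcal{X}^*\times\Lambda^*$. Existence of an optimal solution, which is assumed in the paper, gives existence of multipliers, so $\mathcal{S}(p)\neq\emptyset$. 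Moreover, $\mathcal{S}(p)$ is a finite union of polyhedra: for each index set $J$ of constraints with $\beta_J>0$, the conditions are linear equalities and inequalities. Since $\mathcal{S}(p)$ is also convex (it is the product of the solution sets), it is in fact a single polyhedron.

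\textbf{Step 2: piecewise-affine error bound.} The map $F(x,\beta)=r(x,\beta;p)$ is piecewise affine, with finitely many affine pieces $F_i$ on polyhedral cells $C_i$ determined by the sign pattern of $\beta-Ax+c(p)$. Consequently $F^{-1}$ is a polyhedral multifunction. We then apply Robinson's result: a polyhedral multifunction is locally upper Lipschitz at every point. Concretely, there exist $\kappa,\varepsilon>0$ such that $F^{-1}(w)\subset F^{-1}(0)+\kappa\|w\|\mathbb{B}$ whenever $\|w\|\leq\varepsilon$. Taking $w=F(x,\beta)$ yields $\mathrm{dist}((x,\beta),\mathcal{S}(p))\leq\kappa(p)\|r\|_2$ for all $(x,\beta)$ with $\|r\|\leq\varepsilon$. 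To turn this residual condition into the stated neighborhood condition, we use continuity of $r$: since $r$ is Lipschitz, $\|r(x,\beta)\|\leq L_r\,\mathrm{dist}((x,\beta),\mathcal{S}(p))$, so a neighborhood of $\mathcal{S}(p)$ of radius $\varepsilon/L_r$ has residual at most $\varepsilon$.

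For a more self-contained argument, Robinson's theorem can be replaced by a cellwise bound. On each cell $C_i$, apply Hoffman's lemma to the polyhedron $\{z\in C_i: F_i(z)=0\}$, whenever it is nonempty. This controls the distance to that polyhedron by $\|F_i(z)\|$ plus the violation of the cell inequalities, and the latter is zero inside $C_i$. Cells whose zero set is empty but which touch $\mathcal{S}(p)$ are handled by observing that $\|F_i\|$ is bounded below near the zero set.

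\textbf{Main obstacle.} The step we expect to be hardest is this last case in the cellwise argument: relating the distance to a cell's local zero set to the distance to the full set $\mathcal{S}(p)$ uniformly across cells. This is precisely why we prefer to cite Robinson's upper-Lipschitz theorem; alternatively one can invoke the Luo--Tseng/Hoffman error bound for monotone affine variational inequalities, which in fact gives a global bound on bounded sets.

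The "consequently" clause then follows directly. Any sequence with $\|r\|\to0$ that stays in the neighborhood satisfies $\mathrm{dist}\to0$. Note that $\|r\|_2^2$ is piecewise quadratic, so its minimizers are exactly the points of $\mathcal{S}(p)$.
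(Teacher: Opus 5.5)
Your proposal is correct and follows essentially the same route as the paper: identify the zero set of $r(\cdot,\cdot;p)$ with the primal--dual KKT set (via $\beta-\max(0,\beta-Ax+c(p))=\min(\beta,Ax-c(p))$), then invoke the error bound for piecewise-affine/polyhedral systems, which the paper cites only loosely as Hoffman-type and metric-subregularity results. Your version is more explicit---Robinson's upper-Lipschitz theorem gives a single $\kappa(p)$ on a whole tube around a possibly unbounded $\mathcal{S}(p)$, and Lipschitz continuity of $r$ turns the small-residual condition into the stated neighborhood condition---and the cellwise fallback you flag as the main obstacle is in fact routine: each cell's zero set lies inside $\mathcal{S}(p)$, so $\kappa=\max_i\kappa_i$ works, and on cells with empty zero set $\|F_i\|$ is bounded away from zero because $F_i(C_i)$ is a closed polyhedron.
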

\begin{proof}
    See Appendix \ref{appendix_sec_theorem_main}.
\end{proof}

By Theorem~\ref{theorem_main}, substituting the \emph{MLP(ReLU)}-\emph{ADMM(ReLU)}-parameterized proxy \eqref{eqn_DNN_proxy} into the residual $\|r(x,\beta;p)\|_2^2$ naturally induces a self-supervised learning framework for training the neural proxy. Because the $\ell_2$-norm of the optimality condition residual may have several orders of magnitude across different samples, in practice, we design the following training loss,
\begin{equation}
    \mathcal{L}(\theta)
    =\mathbb{E}_{p\sim\mathcal{D}}
    \left[
    \log\left(\left\|
    r\Big(\mathrm{ADMM\text{-}ReLU}_{K,\rho}\big(\mathcal{N}(p;\theta,M,L);p\big);p\Big)
    \right\|_2^2 + 1\right)
    \right].
\end{equation}
Minimizing $\mathcal{L}(\theta)$ is theoretically consistent with driving the prediction of the \emph{MLP(ReLU)}-\emph{ADMM(ReLU)}-parameterized proxy \eqref{eqn_DNN_proxy} toward the optimal primal-dual solution. The dataset $\mathcal{D}$ can be constructed by sampling the points $\{p_i\}_{i=1}^N$ uniformly over the domain $\mathcal{P}$. However, uniform sampling often provides insufficient coverage near the boundary of $\mathcal{P}$, even though these regions are particularly important for characterizing the behavior of the loss function and improving the accuracy of the surrogate model close to the boundary. To obtain better boundary coverage, this paper adopts a projected sampling strategy as shown in Algorithm \ref{alg_projected_sampling}. The unified algorithm for offline training and online inference with certification is presented in Algorithm~\ref{alg_offline_online}.

\section{Numerical Experiments}\label{sec_numerical_experiments}
\vspace{-10pt}
We evaluate \emph{TQPNet} on four increasingly complex experiments on a single RTX 3080 (per-experiment unrolling depth $K_{\text{train}}$ in Appendix~\ref{app:ablation_impl}). Code: \url{https://github.com/SOLARIS-JHU/TQPNet}.

\vspace{-10pt}
\subsection{Advantages of \emph{MLP(ReLU)}-\emph{ADMM(ReLU)}}
\vspace{-10pt}

\textbf{Empirically faster convergence of low-dimensional ADMM-ReLU.}
We evaluate the convergence behavior of the proposed low-dimensional ADMM-ReLU \eqref{eqn_ADMM} against the standard full-dimensional ADMM \eqref{eqn_ADMM_bad} on $100$ independently generated QP instances with dimensions $n=500$ and $m=1000$. The constraint matrix $A\sim\mathrm{scale}\times\mathcal{U}([0,1]^{m\times n})$ is constructed as a sparse matrix with density $0.1$ and the right-hand side is sampled as $b \sim \mathrm{scale}\times\mathcal{U}([0,1]^m)$. Fig.~\ref{fig2} presents the convergence statistics on three different $\mathrm{scale}=\{2,4,6\}$ settings across problem instances, including the median convergence trajectory and the corresponding $5\%\text{--}95\%$ percentile band over the $100$ randomly generated QPs. The results indicate that the proposed low-dimensional ADMM-ReLU \eqref{eqn_ADMM} consistently achieves faster convergence behavior than the standard full-dimensional ADMM \eqref{eqn_ADMM_bad} across problem instances. We \textbf{do not} aim to demonstrate that the proposed low-dimensional ADMM is \textbf{universally faster} than the standard full-dimensional ADMM, since both approaches may still require a large number of iterations in certain cases. Rather, this limitation motivates us to develop an offline self-supervised MLP(ReLU) neural compression of long ADMM iterations to achieve faster online inference.
\begin{figure}[h!]
\vspace{-10pt}
\centering
\begin{subfigure}{0.32\linewidth}
    \centering
    \includegraphics[width=\linewidth]{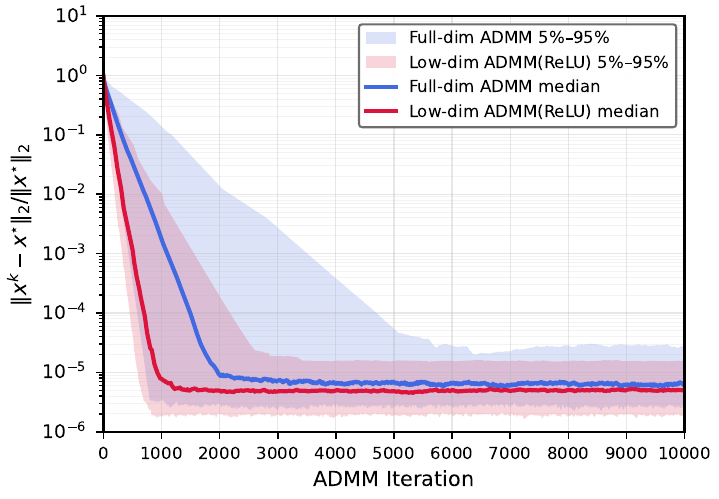}
    \caption{$\mathrm{scale}=2$}
\end{subfigure}
\hfill
\begin{subfigure}{0.32\linewidth}
    \centering
    \includegraphics[width=\linewidth]{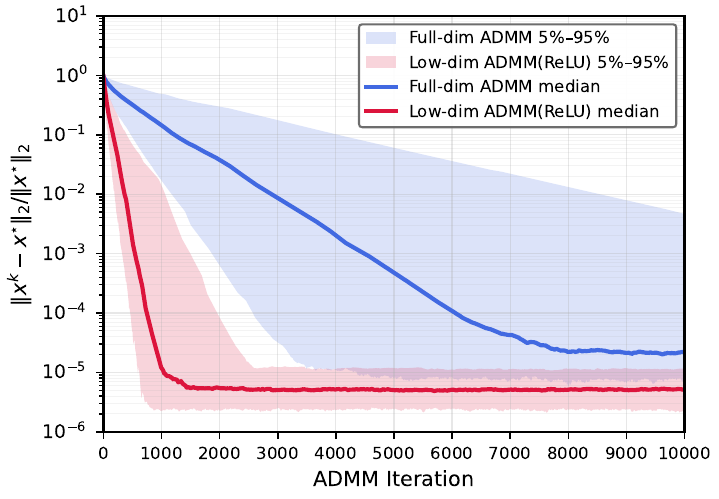}
    \caption{$\mathrm{scale}=4$}
\end{subfigure}
\hfill
\begin{subfigure}{0.32\linewidth}
    \centering
    \includegraphics[width=\linewidth]{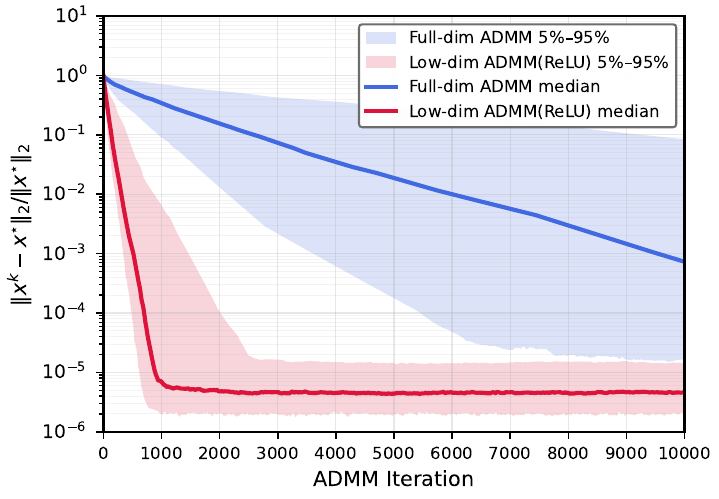}
    \caption{$\mathrm{scale}=6$}
\end{subfigure}
\vspace{-0.2cm}
\caption{Convergence statistics over $100$ random QPs for the low-dimensional ADMM-ReLU and the full-dimensional ADMM.}
\label{fig2}
\end{figure}

\vspace{-12pt}
\textbf{\emph{MLP(ReLU)} compresses long \emph{ADMM(ReLU)} iterations.}
Even after the low-dimensional reformulation, a typical mpQP at
$n{=}100$, $m{=}200$, $n_{p}{=}20$ still requires $4729$ cold-start
\emph{ADMM(ReLU)} iterations to reach $\varepsilon{=}10^{-7}$
(Fig.~\ref{fig:ablation}b).
We amortize this overhead into a learned warm start: an \emph{MLP(ReLU)} maps
$p \mapsto (x_{0},\beta_{0})$, trained end-to-end through
$K_{\text{train}}$ unrolled \emph{ADMM(ReLU)} layers under the KKT loss.
The depth $K_{\text{train}}$ governs whether back-propagation sees
warm-start error or residual ADMM-convergence noise: shallow unrolling
($K_{\text{train}}{\leq}10$) stays in the latter regime and fails to
train, whereas $K_{\text{train}}{=}25$ crosses into the former and
saturates the achievable accuracy
(Fig.~\ref{fig:ablation}a; the required $K_{\text{train}}$ grows with
$n$, Appendix~\ref{app:ablation}).
The trained warm start then cuts the inference budget by up to $1.8\times$
(Fig.~\ref{fig:ablation}b, Tab.~\ref{tab:ablation}).
We denote this \emph{MLP(ReLU)}-\emph{ADMM(ReLU)} hybrid~\eqref{eqn_DNN_proxy} as
\emph{TQPNet}; the supervised baseline \emph{LQPNet}
(Label-supervised QP Net) shares the same MLP backbone but is trained directly on OSQP labels via MSE.
\vspace{-10pt}
\begin{figure}[!htbp]
\centering
\begin{minipage}[t]{0.47\linewidth}
\centering
\includegraphics[width=\linewidth]{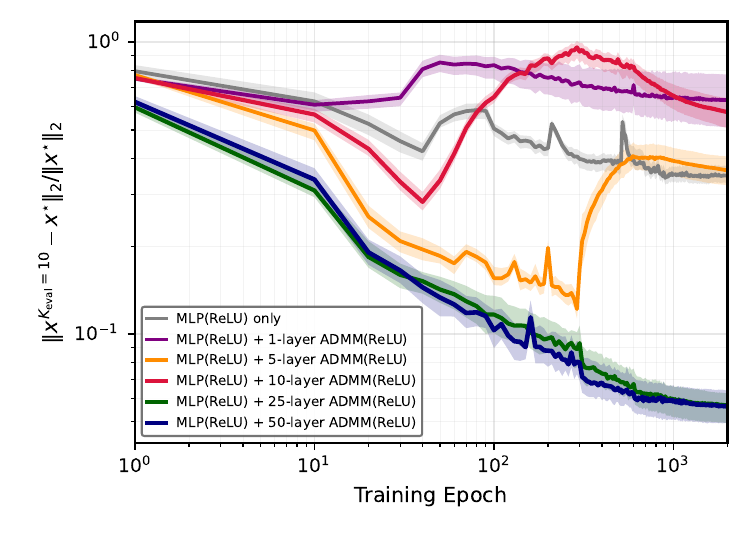}\\
{\small (a) Training.}
\end{minipage}
\hfill
\begin{minipage}[t]{0.47\linewidth}
\centering
\includegraphics[width=\linewidth]{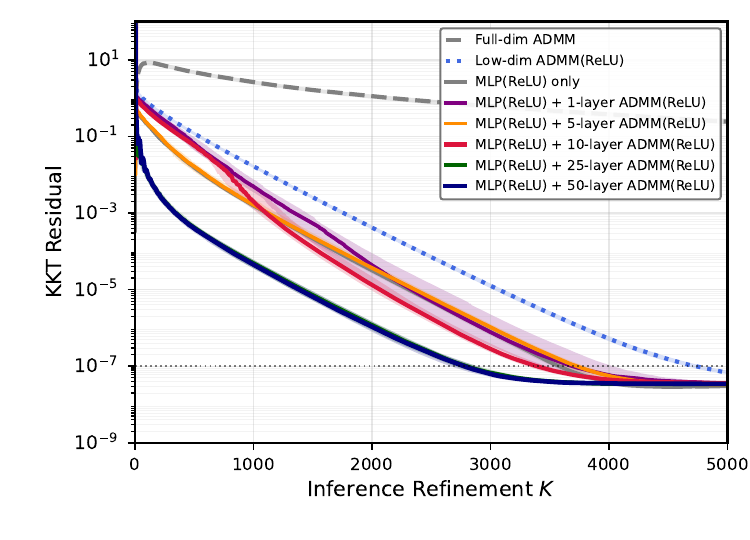}\\
{\small (b) Inference.}
\end{minipage}
\caption{Ablation of $K_{\text{train}}$.
(a) Test relative error vs.\ epoch at fixed $K_{\mathrm{eval}}{=}10$;
(b) post-training KKT residual vs.\ refinement steps $K$.
$K_{\text{train}}{\leq}10$ fails to compress; $K_{\text{train}}{\geq}25$ stabilizes training and accelerates inference $1.7$--$1.8\!\times$ over \emph{ADMM(ReLU)}.}
\label{fig:ablation}
\end{figure}

\subsection{Generalizability and Scalability}
\begin{wrapfigure}[13]{r}{0.5\textwidth}
    \vspace{-20pt}
    \centering
    \includegraphics[width=1\linewidth]{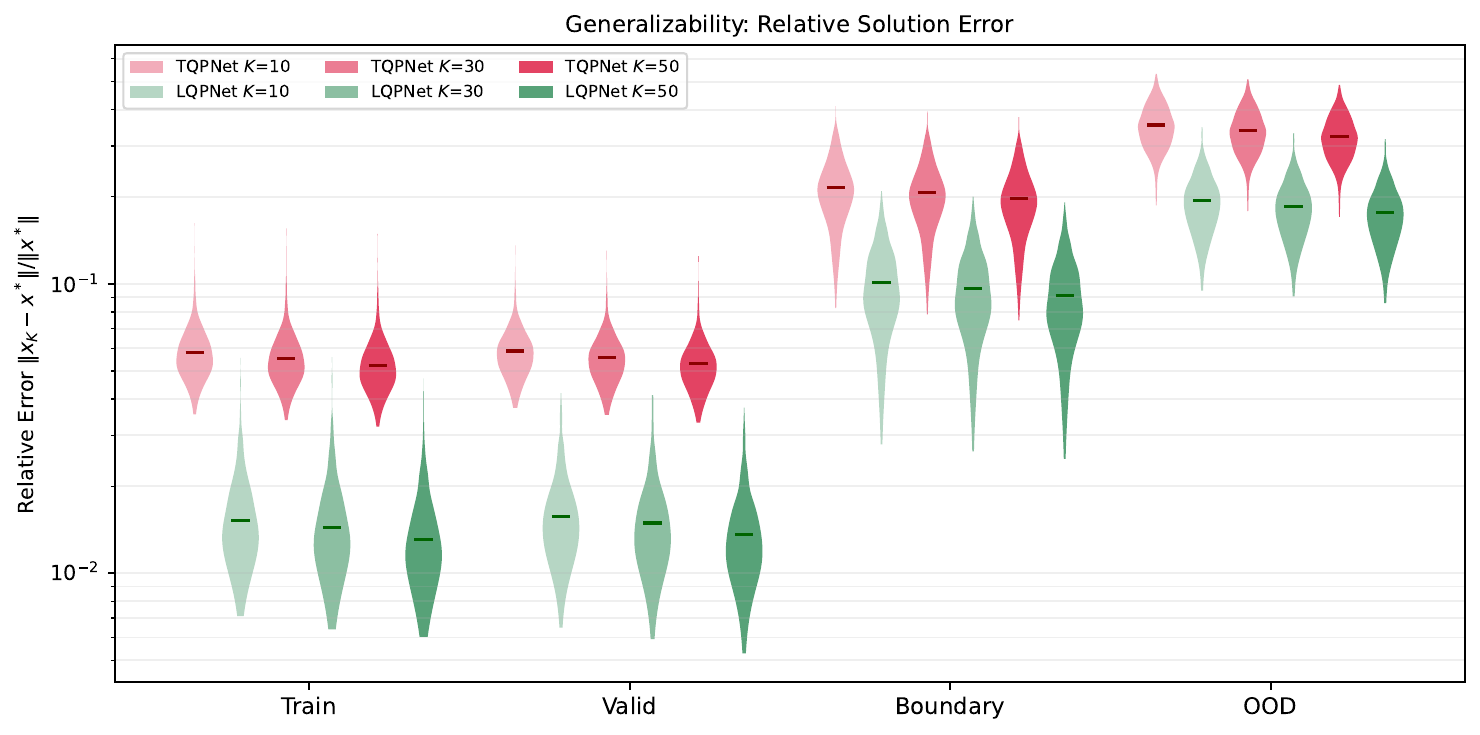}
    \vspace{-0.55cm}
    \caption{Relative solution error across parameter sets at $n{=}100$, $m{=}200$. \emph{LQPNet} is more accurate in distribution, but the TQP--LQP gap shrinks under distribution shift.}
    \label{fig:generalizability}
\end{wrapfigure}

\textbf{Out-of-distribution robustness.}
A key concern for learned solvers is their robustness under
distribution shift.
We train \emph{TQPNet} and \emph{LQPNet} on $n{=}100$, $m{=}200$, $n_p{=}20$
random mpQPs with parameters sampled from $p \sim
\mathcal{U}[-1,1]^{20}$, and evaluate on progressively shifted
ranges (Train/Valid: $\mathcal{U}[-1,1]$, Boundary:
$\mathcal{U}[-2,2]$, OOD: $\mathcal{U}[-3,3]$; $500$ instances
each).
In distribution, \emph{LQPNet} is more accurate (\emph{TQPNet} $5.8\!\times\!10^{-2}$ vs.\ \emph{LQPNet}
$1.5\!\times\!10^{-2}$ at $K{=}10$; Table~\ref{tab:generalizability}),
but the TQP--LQP gap narrows monotonically under shift, from $3.9\!\times$ in-distribution to $1.8\!\times$ on OOD
(Fig.~\ref{fig:generalizability}): \emph{TQPNet}'s KKT-residual training learns features more invariant to parameter shift than direct regression on $x^{\star}$.
\emph{TQPNet} additionally outputs the dual variable $\beta$ as a per-instance KKT certificate and trains without OSQP labels.

\begin{wrapfigure}[24]{r}{0.4\textwidth} 
    \vspace{-5pt}
    \begin{minipage}[b]{0.4\textwidth}
        \centering
        \includegraphics[width=\linewidth]{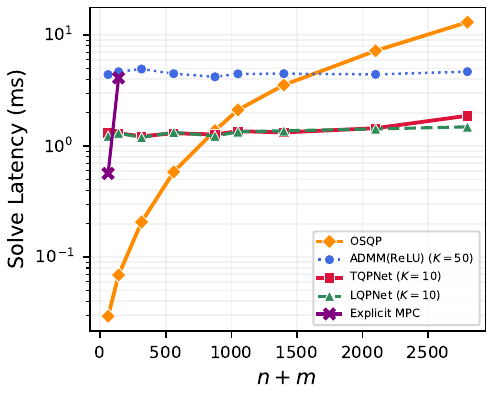}
        \vspace{-0.55cm}
    \end{minipage}
    \hfill
    \begin{minipage}[b]{0.4\textwidth}
        \centering
        \includegraphics[width=\linewidth]{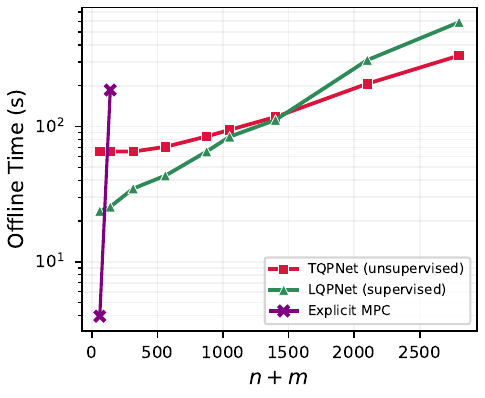}
        \vspace{-0.55cm}
    \end{minipage}
    \vspace{-15pt}
    \caption{(a) Solve latency. (b) Offline time.}
    \label{fig:scalability}
\end{wrapfigure}
\textbf{MPC scalability.} We benchmark MPC-structured QPs generated following Ref.~\cite{wang2009fast} across nine scales (S0--S8, $n{+}m = 60$--$2800$) using closed-loop simulations over $T{=}200$ steps. Fig.~\ref{fig:scalability}(a) shows that \emph{TQPNet} ($K{=}10$) maintains near-constant latency ($\sim$1.3\,ms) across all problem sizes, whereas OSQP exhibits significant growth from $0.03$\,ms to $13$\,ms as the scale increases. The crossover occurs at S4 ($n{+}m{=}875$), beyond which \emph{TQPNet} becomes faster, achieving a $7\times$ speedup at S8 ($n{+}m{=}2800$).

Explicit MPC (using MPT3 \cite{herceg2013multi}) already requires 3123 regions and incurs $4.1$\,ms latency at S1 ($n{+}m{=}140$), making it slower than \emph{TQPNet} at small scales and infeasible beyond S2 due to exponential region growth. Table~\ref{tab:scalability_mpc} further shows that both \emph{TQPNet} and \emph{LQPNet} match OSQP in tracking RMSE and closed-loop cost across all scales, while \emph{ADMM(ReLU)} with cold initialization ($K{=}50$) exhibits a noticeable performance gap, highlighting the importance of learned warm starts.

Fig.~\ref{fig:scalability}(b) compares offline preparation costs. \emph{TQPNet}’s unsupervised training pipeline avoids expensive OSQP label generation and becomes more efficient than \emph{LQPNet}’s supervised approach for $n{+}m \geq 2100$, demonstrating superior scalability in both online inference and offline training.

\subsection{Application to Quadrupedal Locomotion}\label{ssec:quadruped}
\vspace{-5pt}
Complementing the single-UAV experiment in Appendix~\ref{app:appendix_uav}, we now consider quadrupedal locomotion, where many QPs must be solved per control cycle.
Quadrupedal locomotion control is formulated via convex MPC \cite{di2018dynamic} running at $50\,\mathrm{Hz}$ over a prediction horizon of 16 steps with sampling period $\Delta t=0.05\,\mathrm{s}$. At each control cycle, the MPC problem is parameterized by $n_p = 18$ parameters, including the body roll--pitch--yaw angles, center-of-mass height, desired linear and angular velocities, and gait-related parameters (e.g., contact schedule and swing-phase indicators), and reduces to a QP with $n = 192$ decision variables and $m = 384$ inequality constraints. \emph{TQPNet} is trained offline on QP instances sampled from the locomotion parameter distribution over $\mathcal{P} \subset \mathbb{R}^{18}$, without any environment interaction or solver-generated labels, with training completing in under $5$ minutes on a single RTX 3080. The trained proxy is subsequently deployed in massively parallel simulation using Isaac Gym \cite{makoviychuk2021isaac}, in which each simulated robot corresponds to an independent QP instance that is solved concurrently within a single control cycle.

\begin{wrapfigure}[22]{r}{0.35\textwidth} 
    \centering
    \vspace{-10pt}
    \begin{minipage}[b]{0.35\textwidth}
        \centering
        \includegraphics[width=\linewidth]{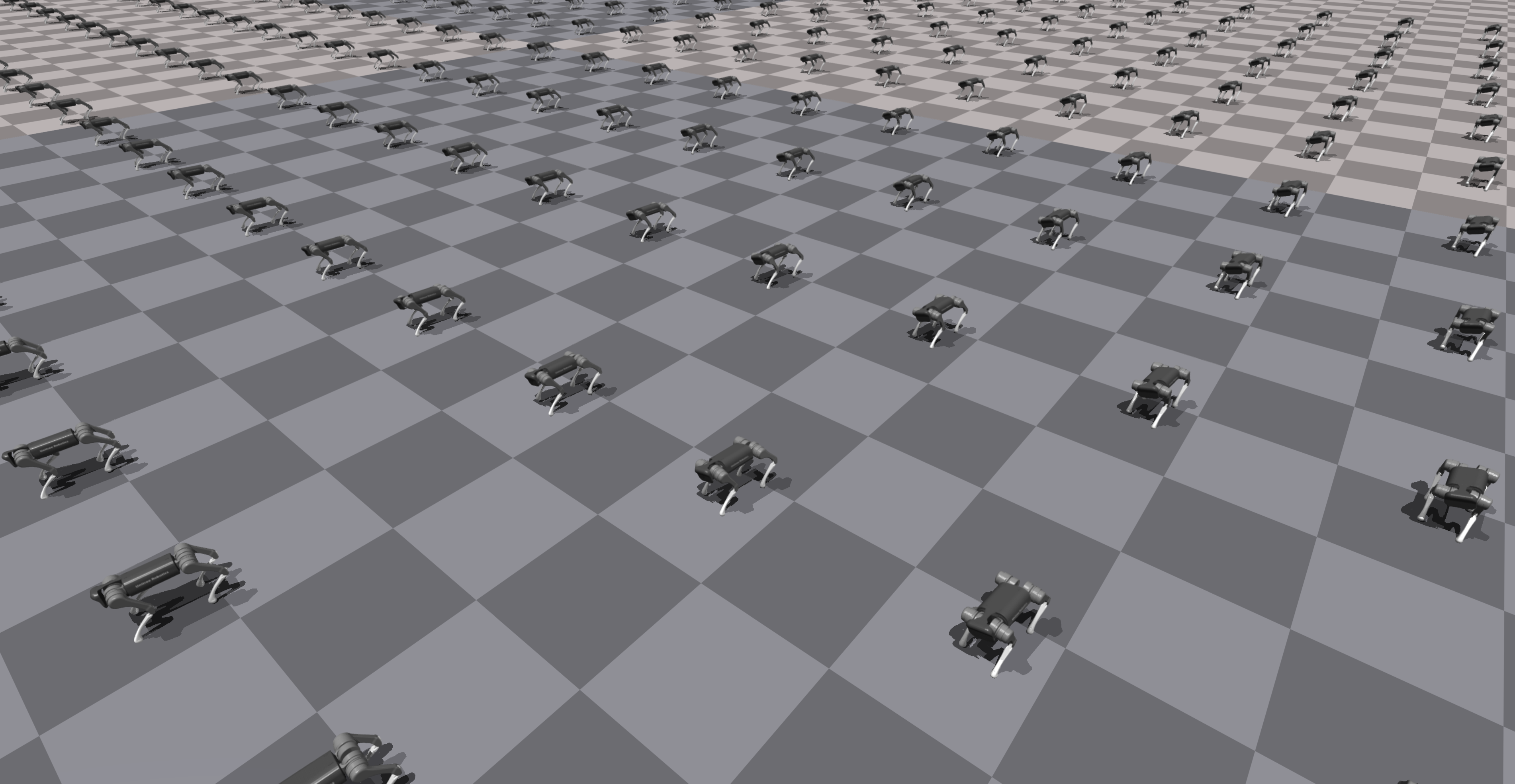}
        \vspace{-0.55cm}
    \end{minipage}
    \hfill
    \begin{minipage}[b]{0.35\textwidth}
        \centering
        \includegraphics[width=\linewidth]{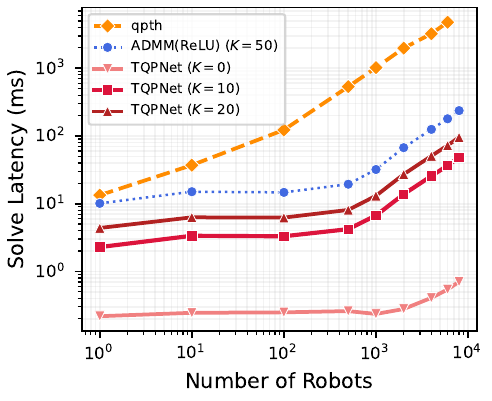}
        \vspace{-0.55cm}
    \end{minipage}
    \vspace{-15pt}
    \caption{(a) 4096 robots concurrently controlled by \emph{TQPNet}-based MPC in Isaac Gym. (b) Solve latency vs.\ number of parallel environments.}
    \label{fig:quadruped_batch}
\end{wrapfigure}
Fig.~\ref{fig:quadruped_batch} reports the wall-clock solve latency as a function of the number of parallel environments (equivalently, the number of simultaneously solved QP instances). \emph{TQPNet} exploits GPU-native batch matrix operations inherited from the \emph{MLP(ReLU)}-\emph{ADMM(ReLU)} architecture, maintaining near-constant per-batch latency as the environment count grows. In contrast, qpth~\cite{amos2017optnet} incurs significantly higher latency due to its reliance on interior-point solvers, and \emph{ADMM(ReLU)} with cold initialization requires substantially more refinement iterations without a learned warm start. 

Fig.~\ref{fig:quadruped_cost_and_violation} demonstrates (a) relative cost error against the OSQP optimum and (b) maximum constraint violation, as functions of refinement steps $K$. \emph{TQPNet} attains a cost error below $10^{-2}$ and a constraint violation below $10^{-2}$ within fewer than $K{=}20$ refinement steps, whereas cold-started \emph{ADMM(ReLU)} requires more than $K{=}60$ iterations to reach a comparable accuracy. This gap directly quantifies the value of the learned warm start: the \emph{MLP(ReLU)} initializer effectively compresses hundreds of ADMM iterations into a single forward pass while preserving certified solution quality. These results, combined with the batch-latency advantage in Fig.~\ref{fig:quadruped_batch}(b), show that \emph{TQPNet} delivers high-fidelity QP solutions at the throughput required by parallel reinforcement-learning pipelines in robotics.
\begin{figure}[htbp]
  \centering
  \begin{minipage}[t]{0.45\linewidth}
    \centering
    \includegraphics[width=\linewidth]{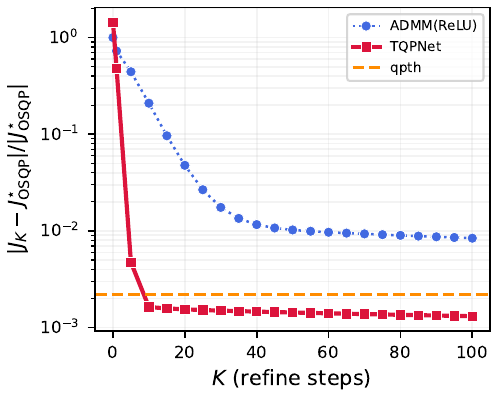}\\
  \end{minipage}
  \hfill
  \begin{minipage}[t]{0.45\linewidth}
    \centering
    \includegraphics[width=\linewidth]{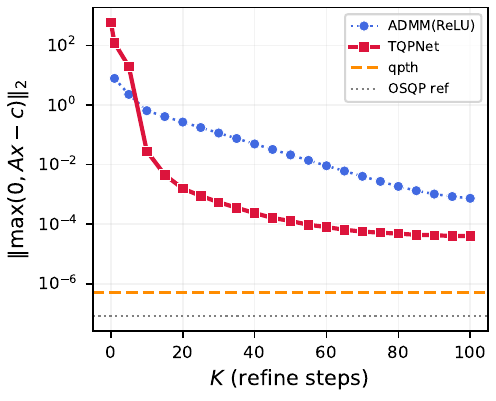}\\
  \end{minipage}
  \caption{Solution quality vs.\ refinement steps $K$ on the quadruped locomotion MPC. (a) Relative QP cost error vs.\ refinement steps $K$. (b) Constraint violation vs.\ refinement steps $K$.}
  \label{fig:quadruped_cost_and_violation}
  \vspace{-10pt}
\end{figure}
\vspace{-10pt}
\section{Conclusion}
\vspace{-10pt}
This paper proposed \emph{TQPNet}, a self-supervised neural compression framework for mpQP that compresses long ADMM optimization trajectories into a compact neural warm start followed by a few refinement iterations. By deriving a low-dimensional ADMM and its equivalent ReLU-layer representation, we use a hybrid \emph{MLP(ReLU)}-\emph{ADMM(ReLU)} architecture that combines amortized optimization with optimization-based refinement, enabling adaptive online certification. Experiments demonstrate substantial acceleration on large-scale mpQPs and real-time MPC applications while preserving strong solution accuracy and scalability. More broadly, this work suggests a new paradigm for neural compression of iterative optimization algorithms.

\textbf{Limitations} include the lack of a rigorous convergence analysis characterizing when the proposed low-dimensional ADMM outperforms the standard full-dimensional ADMM. In particular, since infinitely many COPE decompositions may exist for a given matrix, it remains unclear which decomposition yields the best convergence behavior.

Future work includes extending the proposed framework to more general classes of optimization, such as multiparametric mixed-integer QPs or nonlinear programs, as well as exploring hardware-aware implementations to further improve real-time performance.

\newpage
\begin{ack}
    This research was supported by the Ralph O'Connor Sustainable Energy Institute at Johns Hopkins University, and by the U.S. DOE Office of Science, ASCR program under the Scientific Discovery through Advanced Computing (SciDAC) Institute ``LEADS: LEarning-Accelerated Domain Science.'' The authors declare no competing interests.
\end{ack}

\bibliographystyle{unsrt}
\bibliography{refs}
\appendix
\section{Proofs}
\subsection{Proof of Lemma \ref{lemma_ADMM_ReLU_3}}\label{appendix_sec_lemma_ADMM_ReLU_3}
\begin{proof}
For the ADMM iteration \eqref{eqn_ADMM_bad}, putting $u$-update before $x$- and $z$- updates, which is equivalent to a special initialization for $u^0$ at the cost of one iteration lag in $u$-update and still preserves the convergence guarantee, results in
\begin{subequations}
 \begin{align}
    u^{k+1} &=u^{k} + z^{k}-Ax^{k}\label{eqn_u_k1}\\
    x^{k+1}&= H\left(-q(p)+\rho A^\top\left(z^k+u^{k+1}\right)  \right)\label{eqn_x_k1}\\
    z^{k+1} &= \max\left(c(p), Ax^{k+1}-u^{k+1} \right)\label{eqn_z_k1}
\end{align}   
\end{subequations}
Substituting Eqn.~\eqref{eqn_u_k1} into Eqn.~\eqref{eqn_x_k1}, and then substituting Eqns.~\eqref{eqn_u_k1}--\eqref{eqn_x_k1} into Eqn.~\eqref{eqn_z_k1}, results in the ReLU-like layer \eqref{eqn_ADMM_ReLU_like}, which completes the proof.
\end{proof}

\subsection{Proof of Lemma \ref{lemma_D_U_D_V}}\label{appendix_sec_lemma_D_U_D_V}
\begin{proof}
Based on the definition of $U,V$ in \eqref{eqn_cope}, we have
\[
    \begin{aligned}
         UU^\top &= (A_{i_1,j_1})^2e_{i_1}^m (e_1^o)^\top e_1^o (e_{i_1}^m)^\top + \cdots + (A_{i_o,j_o})^2e_{i_o}^m (e_o^o)^\top e_o^o(e_{i_o}^m)^\top\\ 
         &= \sum_{i_k=1}(A_{1,j_k})^2 e_1^m (e_1^m)^\top+\cdots + \sum_{i_k=m}(A_{m,j_k})^2 e_m^m (e_m^m)^\top\\
         &=\left[\begin{array}{cccc}
             A_{1,:}A_{1,:}^\top &  & & \\
             & A_{2,:}A_{2,:}^\top & & \\
             &  & \ddots & \\
             &  & & A_{m,:}A_{m,:}^\top
         \end{array}\right]\in\rr^{m\times m}
    \end{aligned}
\]
and 
\[
    \begin{aligned}
        VV^\top &= e_{j_1}^n (e_1^o)^\top e_1^o(e_{j_1}^n )^\top +  e_{j_2}^n (e_2^o)^\top e_2^o(e_{j_2}^n )^\top + \cdots + e_{j_o}^n (e_o^o)^\top  e_o^o(e_{j_o}^n)^\top\\
        &=e_{j_1}^n (e_{j_1}^n )^\top +e_{j_2}^n (e_{j_2}^n )^\top + \cdots + e_{j_o}^n (e_{j_o}^n)^\top\\
        &=\sum_{j_k=1} e_1^n(e_1^n)^\top  + \sum_{j_k=2} e_2^n(e_2^n)^\top  + \cdots + \sum_{j_k=n} e_n^n(e_n^n)^\top\\
        &=\left[\begin{array}{cccc}
             \mathrm{nnz}(A_{:,1}) & &  & \\
             & \mathrm{nnz}(A_{:,2}) &   & \\
              & &\ddots & \\
              & & & \mathrm{nnz}(A_{:,n})
        \end{array} \right]\in\rr^{n\times n},
    \end{aligned}
\]
To clarify the concept, we offer an example: given a $3\times 5$ matrix with $9$ non-zero elements as follows,
\[
A = \left[\begin{array}{ccccc}
    1 & 0 & 4 & 6 & 8 \\
    0 & 0 & 5 & 0 & 9 \\
    2 & 3 & 0 & 7 & 0
\end{array} \right],
\]
then its COPE decomposition is shown below:
\[
U = \left[\begin{array}{ccccccccc}
    1 & 0 & 0 & 4 & 0 & 6 & 0 & 8 & 0\\
    0 & 0 & 0 & 0 & 5 & 0 & 0 & 0 & 9\\
    0 & 2 & 3 & 0 & 0 & 0 & 7 & 0 & 0
\end{array} \right],~V = \left[\begin{array}{ccccccccc}
    1 & 1 & 0 & 0 & 0 & 0 & 0 & 0 & 0  \\
    0 & 0 & 1 & 0 & 0 & 0 & 0 & 0 & 0  \\
    0 & 0 & 0 & 1 & 1 & 0 & 0 & 0 & 0  \\
    0 & 0 & 0 & 0 & 0 & 1 & 1 & 0 & 0  \\
    0 & 0 & 0 & 0 & 0 & 0 & 0 & 1 & 1 
\end{array} \right]
\]
with
\[
D_U=UU^\top = \left[\begin{array}{ccc}
    117 & 0 & 0 \\
    0 & 106 & 0 \\
    0 & 0 & 62
\end{array}\right],~ D_V=VV^\top = \left[\begin{array}{ccccc}
     2 & 0 & 0 & 0 & 0 \\
     0 & 1 & 0 & 0 & 0 \\
     0 & 0 & 2 & 0 & 0 \\
     0 & 0 & 0 & 2 & 0 \\
     0 & 0 & 0 & 0 & 2 
\end{array}\right].
\]
This completes the proof.
\end{proof}

\subsection{Proof of Lemma \ref{lemma_closed_form}}\label{appendix_sec_lemma_closed_form}
\begin{proof}
The KKT condition of the optimization problem \eqref{eqn_ADMM_y} is shown below:
\[
\begin{aligned}
  &y - (V^\top x^k-u^k) - U^\top \beta=0 \\
  &s= Uy-c(p)\geq0\\
 &s_i \beta_i=0,~s_i\geq0,~\beta_i\geq0,~i=1,\cdots,m
\end{aligned}
\]
where $\beta\in\rr^m$ and $s\in\rr^{m}$ denote the Lagrangian and slack variables, respectively. Eliminating $y$ via $y=V^\top x^k-u^k+U^\top\beta$ can result in a trivial linear complementarity problem as follows,
\[
\begin{aligned}
  &s= U(V^\top x^k-u^k + U^\top \beta)-c(p)=Ax^k-Uu^k+D_U\beta-c(p)\geq0\\
  &s_i \beta_i=0,~s_i\geq0,~\beta_i\geq0,~i=1,\cdots,m
\end{aligned}
\]
which admits a closed-form solution $\beta=\max\left(0,D_U^{-1}(-Ax^k+Uu^k+c(p))\right)$, due to $D_U$ being a positive diagonal matrix. This completes the proof.
\end{proof}

\subsection{Proof of Theorem \ref{theorem_low_dim_ADMM_ReLU}}\label{appendix_sec_low_dim_ADMM_ReLU}
\begin{proof}
By Lemma \ref{lemma_closed_form}, we can replace \eqref{eqn_ADMM_y} with Eqn.~\eqref{eqn_y_update_closed_form} and eliminate $y^{k+1}$, which results in
\[
\begin{aligned}
    \beta^{k}&=\max\left(0,D_{U}^{-1}(-A x^k+Uu^k+c(p))\right)\\
    x^{k+1} &=(Q+\rho D_V)^{-1}\left(\rho V(V^\top x^k-u^k+U^\top\beta^{k}+u^k)-q(p) \right)\\
    &=(Q+\rho D_V)^{-1}\left( \rho D_Vx^k+\rho A^\top\beta^{k}-q(p)\right)\\
    u^{k+1} &= u^k +(V^\top x^k-u^k+U^\top\beta^{k})-V^\top x^{k+1} =V^\top (x^{k} - x^{k+1}) + U^\top \beta^k
\end{aligned}
\]
Furthermore, we have
\[
\begin{aligned}
    \beta^{k+1}&=\max\left(0,D_{U}^{-1}(-A x^{k+1}+Uu^{k+1}+c(p))\right)\\
    &\quad\Big(\text{where } Uu^{k+1}=UV^\top (x^{k} - x^{k+1}) + UU^\top \beta^k=A(x^{k} - x^{k+1}) + D_U \beta^k\Big)\\
    &=\max\left(0,D_{U}^{-1}(-A x^{k+1}+ A(x^{k} - x^{k+1}) + D_U \beta^k+c(p))\right)\\
    &=\max\left(0,\beta^k-D_{U}^{-1} A(2x^{k+1} - x^k) + D_{U}^{-1}c(p)\right)
\end{aligned}
\]
By adopting a special initialization $u^0=0$ and $\beta^0=\max(0,D_U^{-1}(c(p)-Ax^0))$, we can eliminate the $u$-update and obtain a low-dimensional ADMM iteration
\begin{equation}\label{eqn_lw_ADMM}
\begin{aligned}
x^{k+1}&=(Q+\rho D_V)^{-1}\left( \rho D_Vx^k+\rho A^\top\beta^{k}-q(p)\right)\\
\beta^{k+1}&=\max\left(0,\beta^k - D_{U}^{-1} A(2x^{k+1} - x^k) +D_U^{-1}c(p)\right)
\end{aligned} 
\end{equation}
which can be easily rewritten as a ReLU formulation as shown in Eqn. \eqref{eqn_ADMM_ReLU_2}. Moreover, Eqn. \eqref{eqn_ADMM_ReLU_2} involves the matrix-vector multiplication with the dimension $n+m$, thus the computation complexity is $O((n+m)^2)$. This completes the proof.
\end{proof}

\subsection{Proof of Theorem \ref{theorem_main}}\label{appendix_sec_theorem_main}
\begin{proof}
The Karush--Kuhn--Tucker (KKT) condition (see \cite[Ch.\ 5]{boyd2004convex}) for mpQP \eqref{eqn_QP} is given by
\begin{subequations}\label{eqn_KKT}
\begin{align}
Qx+q(p)-A^\top \beta &=0, \label{eqn_KKT_a}\\
y-Ax+c(p)&=0, \label{eqn_KKT_b}\\
y\geq 0,\quad \beta\geq 0,\quad y\pdot \beta&=0,
\label{eqn_KKT_c}
\end{align}
\end{subequations}
where $\pdot$ denotes the Hadamard product. Eqn. \eqref{eqn_KKT_c} can be equivalently rewritten as $\beta- \max(0,\beta-y)=0 \Leftrightarrow \beta- \max(0,\beta-Ax+c(p))=0$
where the equivalence follows by substituting Eqn. \eqref{eqn_KKT_b}. Then, combining this expression with \eqref{eqn_KKT_a} yields the KKT-informed residual defined in \eqref{eqn_r_definition}.
The piecewise affine structure of the KKT mapping in \eqref{eqn_KKT} implies, through standard Hoffman-type error bounds and metric subregularity results \cite{rockafellar1998variational}, that the distance to the solution set is locally upper bounded by the norm of the KKT residual, so there exists a constant $\kappa(p)>0$ such that $\mathrm{dist}\big((x,\beta),\mathcal{S}(p)\big)
\leq \kappa(p)\left\|r(x,\beta;p)\right\|_2,$ for all $(x,\beta)$ in a neighborhood of $\mathcal{S}(p)$, which completes the proof.
\end{proof}

\vspace{-10pt}
\section{Algorithms}
\vspace{-10pt}
\begin{algorithm}[H]
    \caption{Projected sampling strategy}\label{alg_projected_sampling}
    \textbf{Input}: the given domain $\mathcal{P}$ and the required number of data points $N$.
    \vspace*{.1cm}\hrule\vspace*{.1cm}
    1.\quad Construct an enlarged domain $\Tilde{\mathcal{P}}$ such that $\mathcal{P}\subseteq\Tilde{\mathcal{P}}$;\\
    2.\quad Sample the data points $\{p_i \}_{i=1}^N$ uniformly from the enlarged domain  $\Tilde{\mathcal{P}}$;\\
    3.\quad Project the sampled data points onto the original domain $\mathcal{P}$ by solving a projection problem: $  p_i^{\text{proj}} = \argmin_{p\in\mathcal{P}}\|p-p_i\|_2^2,~\forall i=1,\cdots,N$\\
    \textbf{Output}: $\mathcal{D}=\{p_i^{\text{proj}}\}_{i=1}^N$;
\end{algorithm}
\vspace{-10pt}
\begin{algorithm}[H]
    \caption{Unified offline training and online  inference }\label{alg_offline_online}
    \textbf{Offline training}. \\
    \textbf{Input}:  $\mathcal{D}$ collected by Algorithm \ref{alg_projected_sampling}, learning rate $\eta$, batch size $B$, number of epochs $N_{\mathrm{epoch}}$, offline stopping tolerance $\epsilon_{\text{off}}$.
    \vspace*{.1cm}\hrule\vspace*{.1cm}
    \textbf{for} $\mathrm{epoch}=1: N_{\mathrm{epoch}}$
    \begin{enumerate}[label*=\arabic*., ref=\theenumi{}]
    \item Split the dataset $\mathcal{D}$ into mini-batches $\mathcal B\subset \mathcal{D}$ with $|\mathcal B|=B$ and calculate the mini-batch loss $\mathcal{L}_{\mathcal B}(\theta)=\frac{1}{|\mathcal B|}\sum_{p\in\mathcal B}\log\big(\big\|r\big(\mathrm{ADMM\text{-}ReLU}_{K,\rho}(\mathcal{N}(p;\theta,M,L);p);p\big)\big\|_2^2+1\big)$;
    \item Update the neural network parameters using automatic differentiation and a gradient-based optimizer such as Adam or quasi-Newton (L-BFGS): $\theta \leftarrow \theta-\eta \nabla_\theta \mathcal{L}_{\mathcal B}(\theta)$;
    \item If $\mathcal{L}(\theta)\le \epsilon_{\mathrm{off}}$, break;
    \end{enumerate}
    \textbf{end}\\
\textbf{Output}: trained parameters $\theta$.
\vspace*{.1cm}\hrule\vspace*{.1cm}
\textbf{Online inference (supporting batch scenario) with triggered trustworthy correction}. \\
\textbf{Input}: trained parameters $\theta$,  matrix $P=\mathrm{col}(p_1,\cdots,p_{n_b})$ consisting of a batch of parameter vectors, online tolerance $\epsilon_{\mathrm{on}}$.
\vspace*{.1cm}\hrule\vspace*{.1cm}
    1.~~Compute the batch of neural proxies \textbf{in parallel}:
    {\scriptsize
    \[
    \begin{aligned}
    \left(\begin{bmatrix}
        x_1\\
        \beta_1
    \end{bmatrix},\cdots,\begin{bmatrix}
        x_{n_b}\\
        \beta_{n_b}
    \end{bmatrix}\right)=\left(\mathrm{ADMM\text{-}ReLU}_{K,\rho}\big(\mathcal{N}(p_1;\theta,M,L);p_1\big),\cdots,\mathrm{ADMM\text{-}ReLU}_{K,\rho}\big(\mathcal{N}(p_{n_b};\theta,M,L);p_{n_b}\big)\right)
    \end{aligned}
    \]
    }
    2.~~Compute the certificate value \textbf{in parallel}: 
    \[
    \bm{\varepsilon}=\left(\left\|r\big(x_1,\beta_1;p_1\big)\right\|_2^2,\cdots,\left\|r\big(x_{n_b},\beta_{n_b};p_{n_b}\big)\right\|_2^2\right)
    \]
3.~~If $\|\bm{\varepsilon}\|_{\infty}\leq\epsilon_{\mathrm{on}}$, accept the neural prediction and return;\\
4.~~Otherwise, find indices $\mathcal{I}$ such that $\bm{\varepsilon}_{\mathcal{I}}>\epsilon_{\mathrm{on}}$;\\
5.~~Update $(\{x\}_{\mathcal{I}},\{\beta\}_{\mathcal{I}})$ via low-dimensional ADMM-ReLU \eqref{eqn_ADMM_ReLU_2} in batch scenario;\\
\textbf{Output}: $(\{x\}_{1:n_b},\{\beta\}_{1:n_b})$.
\end{algorithm}

\section{Additional Ablation Results}\label{app:ablation}

\subsection{Cross-scale \texorpdfstring{$K_{\text{train}}$}{K\_train} scaling rule}\label{app:ablation_2d}

To verify that the unrolling depth must scale with problem size, we
replicate the ablation at the smaller scale $n{=}10$, $m{=}20$,
$n_{p}{=}4$. Table~\ref{tab:ablation_cross_scale} reports the median
test rel error (at fixed $K_{\mathrm{eval}}{=}10$) across both scales
and all six $K_{\text{train}}$ values, and
Fig.~\ref{fig:app_ablation_n10} shows the corresponding
training and inference curves at $n{=}10$.
The saturation point shifts from $K_{\text{train}}{=}10$ at $n{=}10$
(rel error $\sim 5\!\times\!10^{-3}$) to $K_{\text{train}}{=}25$ at
$n{=}100$ (rel error $\sim 5\!\times\!10^{-2}$); shallower unrolling at
$n{=}100$ remains in the unstable regime documented in the main text.
This is consistent with the contraction rate of ADMM: the operator's
contraction factor approaches $1$ as the conditioning of
$Q + \rho A^{\top}A$ grows with $n$, so a deeper unroll is needed for
$\mathrm{ADMM}_{K}(\mathrm{MLP}(p))$ to enter the neighborhood of
$x^{\star}$ where back-propagation reflects warm-start error.
Empirically, $K_{\text{train}} \approx n/4$ suffices.

\begin{table}[h!]
\centering
\caption{Median test-set rel error after fixed $K_{\mathrm{eval}}{=}10$ refinement at two scales. All entries trained for $2000$ epochs.}
\label{tab:ablation_cross_scale}
\setlength{\tabcolsep}{6pt}
\small
\begin{tabular}{rrrcccccc}
\toprule
$n$ & $m$ & $n_p$ & $K{=}0$ & $K{=}1$ & $K{=}5$ & $K{=}10$ & $K{=}25$ & $K{=}50$ \\
\midrule
10  & 20  & 4  & $1.0\!\times\!10^{-2}$ & $1.8\!\times\!10^{-1}$ & $1.4\!\times\!10^{-2}$ & $5.3\!\times\!10^{-3}$ & $3.6\!\times\!10^{-3}$ & $1.4\!\times\!10^{-2}$ \\
100 & 200 & 20 & $3.5\!\times\!10^{-1}$ & $6.3\!\times\!10^{-1}$ & $3.6\!\times\!10^{-1}$ & $5.8\!\times\!10^{-1}$ & $5.7\!\times\!10^{-2}$ & $5.6\!\times\!10^{-2}$ \\
\bottomrule
\end{tabular}
\end{table}

\begin{figure}[H]
\centering
\begin{minipage}[t]{0.46\linewidth}
\centering
\includegraphics[width=\linewidth]{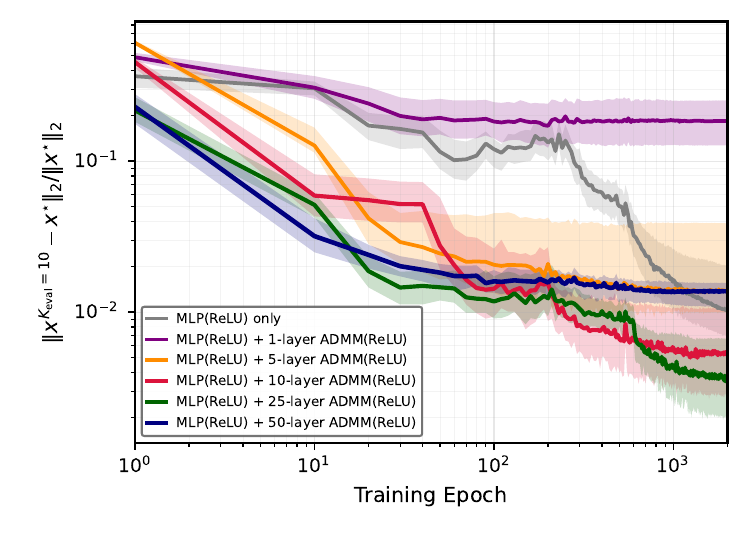}\\
{\small (a) Training.}
\end{minipage}
\hfill
\begin{minipage}[t]{0.46\linewidth}
\centering
\includegraphics[width=\linewidth]{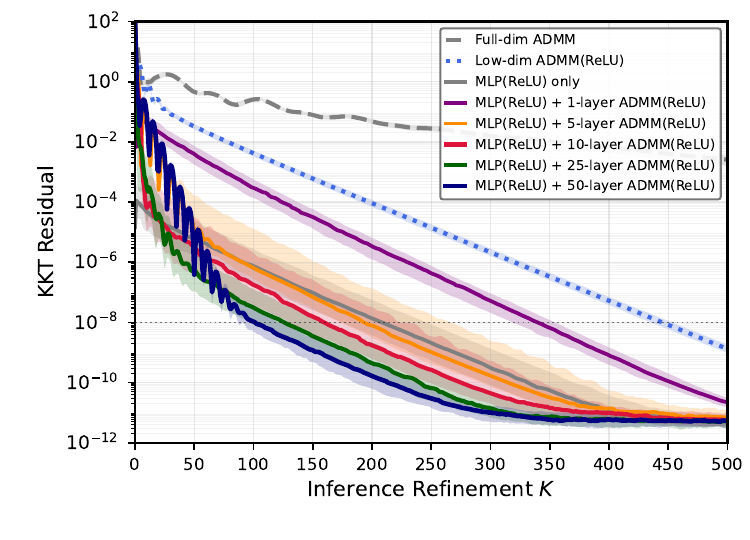}\\
{\small (b) Inference.}
\end{minipage}
\caption{Ablation of $K_{\text{train}}$ at $n{=}10$, $m{=}20$,
$n_{p}{=}4$.
(a) Test relative error vs.\ epoch at fixed $K_{\mathrm{eval}}{=}10$;
(b) post-training KKT residual vs.\ refinement steps $K$.
$K_{\text{train}}{=}10$ already saturates, in contrast to the phase transition in Fig.~\ref{fig:ablation}.}
\label{fig:app_ablation_n10}
\end{figure}

\subsection{Inference convergence detail}\label{app:ablation_full}

Table~\ref{tab:ablation} reports per-method iteration count $K^{\star}$
to KKT residual $\leq 10^{-7}$, mean wall-clock per QP, and the
implied speedup over cold-start ADMM(ReLU), corresponding to
Fig.~\ref{fig:ablation}b.
Fig.~\ref{fig:app_inference_time} re-plots the inference convergence
in cumulative wall-clock per QP at both scales:
$K_{\text{train}}{\geq}25$ reaches $\varepsilon{=}10^{-7}$ in roughly
$6$--$7\,\mathrm{ms}$ at $n{=}100$ versus $\sim 11\,\mathrm{ms}$ for
cold-start ADMM(ReLU), and the proportional speedup is even larger at
$n{=}10$ ($\sim 5\!\times$) where the MLP forward overhead becomes
negligible relative to the iteration savings.

\begin{table}[h]
\centering
\caption{Iterations $K^{\star}$ and wall-clock to KKT residual $\leq 10^{-7}$, median over $100$ test mpQPs at $n{=}100$, $m{=}200$, $n_p{=}20$. Speedup is relative to cold low-dim ADMM(ReLU); $\Delta K$ is the per-QP iteration reduction vs.\ that baseline. Full-dim ADMM does not reach the target within $K{=}10^{4}$.}
\label{tab:ablation}
\setlength{\tabcolsep}{4pt}
\small
\begin{tabular}{lcccc}
\toprule
Method & $K^{\star}$ & Time (ms) & Speedup & $\Delta K$ \\
\midrule
Full-dim ADMM & $>\!10^{4}$ & $>\!50.5$ & — & — \\
Low-dim ADMM(ReLU) & 4729 & 10.689 & 1.0$\times$ & — \\
MLP only & 3578 & 7.777 & 1.4$\times$ & 1132 \\
MLP + 1-layer ADMM(ReLU) & 3692 & 8.032 & 1.3$\times$ & 974 \\
MLP + 5-layer ADMM(ReLU) & 3728 & 7.751 & 1.4$\times$ & 1000 \\
MLP + 10-layer ADMM(ReLU) & 3392 & 7.305 & 1.5$\times$ & 1344 \\
MLP + 25-layer ADMM(ReLU) & 2800 & 6.460 & 1.7$\times$ & 1930 \\
MLP + 50-layer ADMM(ReLU) & 2778 & 6.034 & 1.8$\times$ & 1968 \\
\bottomrule
\end{tabular}
\end{table}

\begin{figure}[H]
\centering
\begin{minipage}[t]{0.46\linewidth}
\centering
\includegraphics[width=\linewidth]{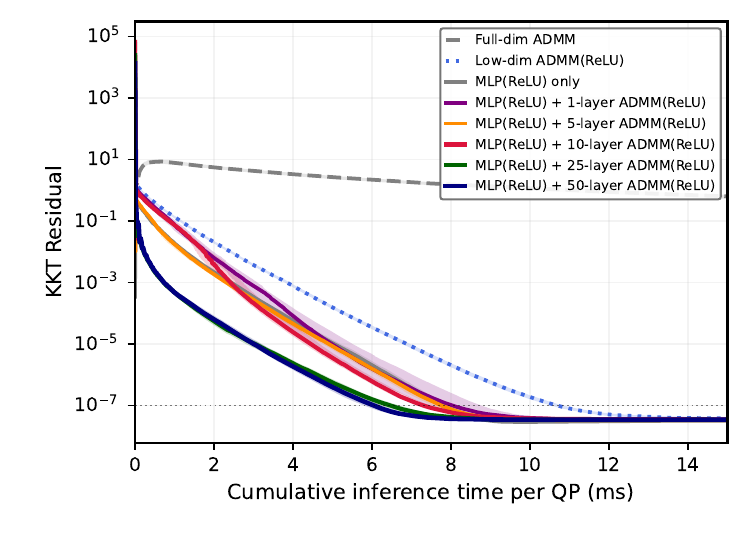}\\
{\small (a) $n{=}100$.}
\end{minipage}
\hfill
\begin{minipage}[t]{0.46\linewidth}
\centering
\includegraphics[width=\linewidth]{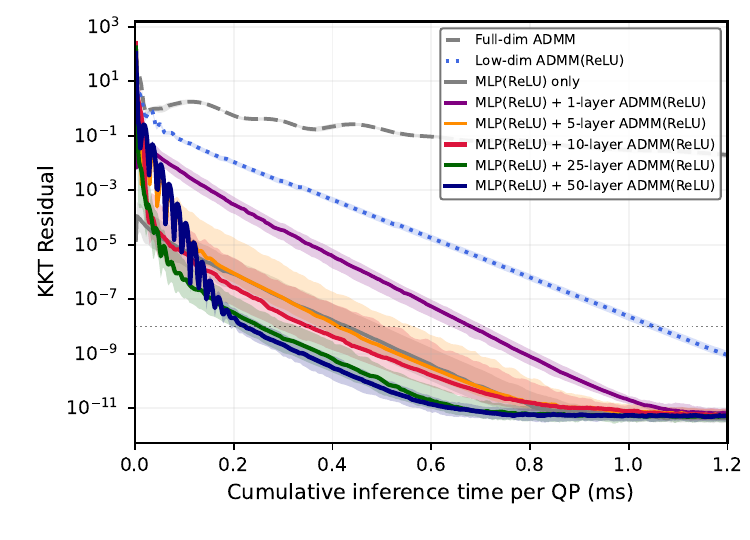}\\
{\small (b) $n{=}10$.}
\end{minipage}
\caption{Wall-clock view of inference convergence: KKT residual vs.\ cumulative time per QP. \emph{TQPNet}'s wall-clock advantage over \emph{ADMM(ReLU)} holds at both scales, with a larger proportional speedup at $n{=}10$.}
\label{fig:app_inference_time}
\end{figure}

\subsection{Implementation details}\label{app:ablation_impl}

\textbf{Architecture.} MLP(ReLU) backbone with depth $L{=}4$ and width
$M{=}512$ at $n{=}100$ ($M{=}256$ at $n{=}10$); the $x$-head is linear
and the $\beta$-head applies Softplus to enforce $\beta \geq 0$
($\sim$1.0M params at $n{=}100$, $\sim$0.2M at $n{=}10$).

\textbf{Unrolling depth.} $K_{\text{train}}{=}25$ for the out-of-distribution
study (Sec.~\ref{sec_numerical_experiments}), $K_{\text{train}}{=}10$ for the MPC
scalability and quadruped studies, and $K_{\text{train}}{=}30$ for the UAV experiment
(Appendix~\ref{app:appendix_uav}); the choice of $K_{\text{train}}$ is studied in
Fig.~\ref{fig:ablation} and Appendix~\ref{app:ablation_2d}.

\textbf{Optimization.} AdamW (lr $10^{-3}$, weight decay $10^{-5}$),
StepLR (step $300$, $\gamma{=}0.5$), gradient clipping at norm $10$,
batch size $256$, $2000$ epochs. Training uses
\texttt{torch.cholesky\_solve} (numerically stable backward); inference
uses pre-computed matmul.

\textbf{Problem distribution.} Sparse positive constraint matrix
$A \!\sim\! \mathrm{Uniform}([0, 3]^{m\times n})$ with column density
$0.3$; $c_{0} {=} A x_{\mathrm{ref}} {+} 1$ for a random reference
$x_{\mathrm{ref}}$ (active-set tightness controlled by the $+1$
offset); $p \!\sim\! \mathrm{Uniform}([{-}1, 1]^{n_p})$ for both
training ($N{=}6000$) and test ($N{=}100$).

\textbf{Reference solver.} OSQP at \texttt{eps\_abs}{$=$}\texttt{eps\_rel}{$=$}$10^{-10}$
with polishing for the test-set $x^{\star}$.

\textbf{Hardware.} Single NVIDIA RTX 3080, PyTorch 2.10, CUDA 12.8.

\newpage                                   
\section{Tables of Numerical Examples}                

\begin{table}[htbp]
\centering
\caption{Generalizability: median relative error ($n=100$, $m=200$, $n_p=20$). Train on $p \sim \mathcal{U}[-1,1]$.}
\label{tab:generalizability}
\begin{tabular}{lcccccc}
\toprule
Set & \multicolumn{2}{c}{$K=10$} & \multicolumn{2}{c}{$K=30$} & \multicolumn{2}{c}{$K=50$} \\
\cmidrule(lr){2-3}
\cmidrule(lr){4-5}
\cmidrule(lr){6-7}
 & TQPNet & LQPNet & TQPNet & LQPNet & TQPNet & LQPNet \\
\midrule
Train & 5.8e-02 & 1.5e-02 & 5.5e-02 & 1.4e-02 & 5.2e-02 & 1.3e-02 \\
Valid & 5.9e-02 & 1.6e-02 & 5.6e-02 & 1.5e-02 & 5.3e-02 & 1.4e-02 \\
Boundary & 2.2e-01 & 1.0e-01 & 2.1e-01 & 9.7e-02 & 2.0e-01 & 9.1e-02 \\
OOD & 3.5e-01 & 1.9e-01 & 3.4e-01 & 1.9e-01 & 3.2e-01 & 1.8e-01 \\
\bottomrule
\end{tabular}
\end{table}

\begin{table}[htbp]
\centering
\caption{Scalability on random MPC problems ($T{=}200$ closed-loop steps with shared process noise $w \sim \mathcal{U}[-0.3, 0.3]^{n_s}$). Online latency in ms, offline training time in s. TQPNet/LQPNet use a fixed network size: hidden width $M{=}512$ and depth $L{=}4$ across all scales.}
\label{tab:scalability_mpc}
\resizebox{\linewidth}{!}{
\begin{tabular}{rrc ccc ccc ccc cc c}
\toprule
 & & & \multicolumn{3}{c}{OSQP} & \multicolumn{3}{c}{TQPNet ($K{=}10$)} & \multicolumn{3}{c}{LQPNet ($K{=}10$)} & \multicolumn{2}{c}{Offline} & Explicit \\
\cmidrule(lr){4-6} \cmidrule(lr){7-9} \cmidrule(lr){10-12} \cmidrule(lr){13-14} \cmidrule(lr){15-15}
$n{+}m$ & $n_p$ & Speedup & RMSE & Cost & Latency & RMSE & Cost & Latency & RMSE & Cost & Latency & TQP & LQP & Latency \\
\midrule
  60 &  3 & $0.0\times$ & 0.221 & 33 & 0.03 & 0.221 & 33 & 1.31 & 0.221 & 33 & 1.23 & 65 & 24 & 0.57 \\
  140 &  4 & $0.1\times$ & 0.218 & 45 & 0.07 & 0.218 & 45 & 1.30 & 0.218 & 45 & 1.31 & 65 & 25 & 4.11 \\
  315 &  6 & $0.2\times$ & 0.217 & 63 & 0.21 & 0.217 & 63 & 1.23 & 0.217 & 63 & 1.20 & 65 & 35 & --- \\
  560 &  8 & $0.4\times$ & 0.263 & 124 & 0.59 & 0.264 & 124 & 1.31 & 0.263 & 124 & 1.33 & 71 & 43 & --- \\
  875 & 10 & $1.1\times$ & 0.217 & 99 & 1.38 & 0.217 & 99 & 1.27 & 0.217 & 99 & 1.23 & 84 & 65 & --- \\
  1050 & 12 & $1.6\times$ & 0.240 & 144 & 2.12 & 0.240 & 145 & 1.36 & 0.240 & 145 & 1.35 & 94 & 84 & --- \\
  1400 & 16 & $2.7\times$ & 0.198 & 130 & 3.55 & 0.198 & 130 & 1.33 & 0.198 & 130 & 1.37 & 118 & 112 & --- \\
  2100 & 24 & $5.0\times$ & 0.217 & 233 & 7.22 & 0.218 & 233 & 1.45 & 0.217 & 233 & 1.43 & 207 & 309 & --- \\
  2800 & 32 & $7.0\times$ & 0.225 & 332 & 13.06 & 0.225 & 334 & 1.88 & 0.225 & 333 & 1.49 & 334 & 590 & --- \\
\bottomrule
\end{tabular}}
\end{table}

\newpage

\section{Application to Quadrotor UAVs}\label{app:appendix_uav}

\textbf{TQPNet matches OSQP closed-loop tracking accuracy.}
We evaluate TQPNet on a quadrotor closed-loop tracking task following
\cite{bemporad2011decentralized}, with state $x\in\mathbb{R}^{13}$,
control $u\in\mathbb{R}^{4}$, and condensed-MPC dimensions $n{=}100$,
$m{=}300$ at horizon $N_{H}{=}25$ ($\Delta t{=}71\,\mathrm{ms}$).
The mpQP parameter $p\in\mathbb{R}^{17}$ comprises the current state
($13$ dims) and a four-component Fourier basis
$[\sin(\omega t), \cos(\omega t), \sin(2\omega t), \cos(2\omega t)]$
encoding the time-varying lemniscate reference. The closed loop is
simulated for $T{=}600$ steps with i.i.d.~process noise
($\sigma_{\mathrm{pos}}{=}3\,\mathrm{mm}$ shared across methods for
fair comparison). We compare TQPNet at $K \in \{10, 20, 30\}$
refinement steps against OSQP through its raw C API
(\texttt{prob.update}\,+\,\texttt{solve}, warm-started) at three
tolerance levels.

Table~\ref{tab:uav_mpc} reports tracking RMSE and per-step latency.
All TQPNet variants attain RMSE in $0.0297$--$0.0302\,\mathrm{m}$,
within $\sim 9\%$ of OSQP's $0.0273\,\mathrm{m}$ across the three
tolerance levels and improving over the cold-start ADMM(ReLU) baseline
($0.0306\,\mathrm{m}$); the closed-loop trajectories
(Fig.~\ref{fig:uav_appendix}) overlap visually because all methods
track the reference within the noise floor.
TQPNet additionally bounds worst-case execution at fixed $K$:
its per-step latency stays within $\sim 2.7\!\times$ of its median
(GPU launch jitter), whereas OSQP's iteration count varies up to
$1500$ at $\varepsilon{=}10^{-8}$ and its worst-case latency reaches
$10$--$25\!\times\!$ the median.
For hard real-time deployment where worst-case rather than mean
latency is binding, this matters.
The asymptotic batch advantage of the proposed architecture is realized
when many independent QPs must be solved in parallel, as demonstrated
by the quadrupedal locomotion experiment in Subsection~\ref{ssec:quadruped}.

\begin{table}[htbp]
\centering
\caption{Single-UAV lemniscate tracking ($N_H{=}25$, $n{=}100$, $m{=}300$, $T{=}600$). OSQP solved via raw C API with warm-started \texttt{prob.update} between steps. RMSE is comparable across methods. Latency and iteration counts report per-step median / maximum: OSQP exhibits worst-case latency $10$--$25\!\times\!$ its median, whereas TQPNet at fixed $K$ has bounded worst-case execution.}
\label{tab:uav_mpc}
\setlength{\tabcolsep}{6pt}
\small
\begin{tabular}{lccc}
\toprule
Method & RMSE (m) & Latency med/max (ms) & Iter med/max \\
\midrule
OSQP ($\varepsilon{=}10^{-6}$) & $0.0273$ & $0.73 \,/\, 7.03$  & $50 \,/\, 450$  \\
OSQP ($\varepsilon{=}10^{-7}$) & $0.0273$ & $0.75 \,/\, 13.48$ & $50 \,/\, 950$  \\
OSQP ($\varepsilon{=}10^{-8}$) & $0.0273$ & $0.75 \,/\, 21.10$ & $50 \,/\, 1500$ \\
\midrule
ADMM(ReLU) ($K{=}50$)          & $0.0306$ & $5.84 \,/\, 13.05$ & $50 \,/\, 50$   \\
TQPNet ($K{=}10$)              & $0.0302$ & $1.67 \,/\,  4.39$ & $10 \,/\, 10$   \\
TQPNet ($K{=}20$)              & $0.0301$ & $2.78 \,/\,  6.81$ & $20 \,/\, 20$   \\
TQPNet ($K{=}30$)              & $0.0297$ & $3.90 \,/\, 10.66$ & $30 \,/\, 30$   \\
\bottomrule
\end{tabular}
\end{table}

\begin{figure}[h!]
\centering
\begin{minipage}[t]{0.48\linewidth}
\centering
\includegraphics[width=\linewidth]{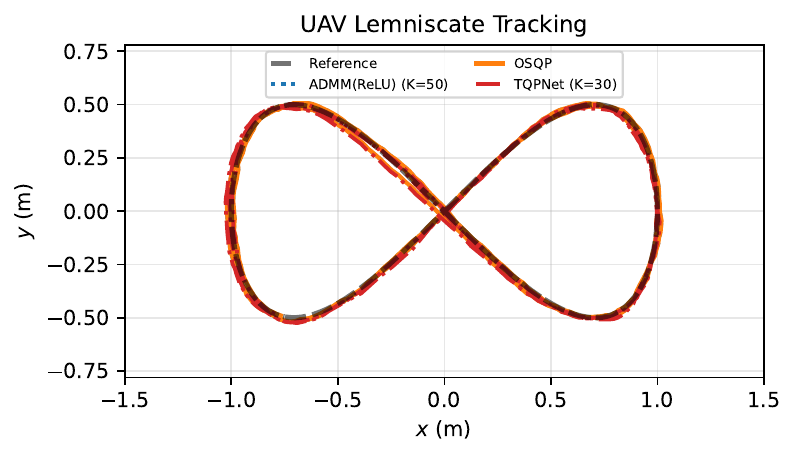}\\
{\small (a) Closed-loop trajectories.}
\end{minipage}
\hfill
\begin{minipage}[t]{0.48\linewidth}
\centering
\includegraphics[width=\linewidth]{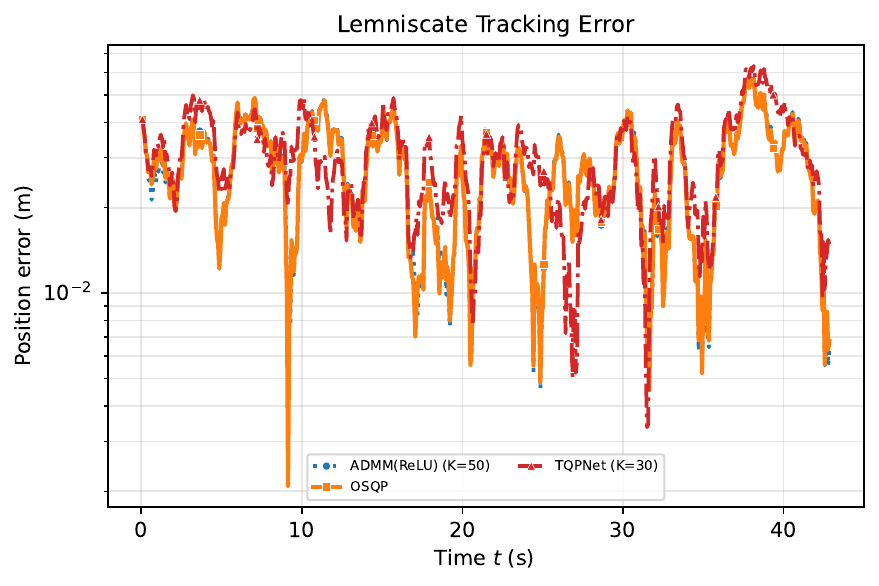}\\
{\small (b) Tracking error.}
\end{minipage}
\caption{Single-UAV lemniscate tracking ($T{=}600$), showing the best TQPNet variant ($K{=}30$) alongside cold-start ADMM(ReLU) and OSQP ($\varepsilon{=}10^{-8}$). All methods overlap at trajectory resolution and tracking errors are within the same magnitude; full method comparison in Table~\ref{tab:uav_mpc}.}
\label{fig:uav_appendix}
\end{figure}

\clearpage
% NeurIPS paper checklist omitted from the arXiv version.

\end{document}